\newtheorem{Definition}{Definition}[section]
\newtheorem{Lemma}{Lemma}[section]
\newtheorem{Theorem}{Theorem}[section]
\newtheorem{Remark}{Remark}[section]
\newtheorem{Assumption}{Assumption}[section]
\newcounter{saveeqn}
\title{\bf  A Pressure-Stabilized Continuous Data Assimilation Reduced Order Model}
\author{
	Xi Li,\footnote{School of Mathematics, Sichuan University, Chengdu, Sichuan 610064, China (li\_xi@stu.scu.edu.cn). The work of this author was supported by the National Natural Science Foundation of China(Grant No. 11971337).}
	\ Youcai Xu\footnote{School of Mathematics, Sichuan University, Chengdu, China(xyc@scu.edu.cn).}
	\ and Minfu Feng\footnote{Corresponding author. School of Mathematics, Sichuan University, Chengdu, Sichuan 610064, China (fmf@scu.edu.cn). The work of this author was supported by the National Natural Science Foundation of China(Grant No. 11971337).}
}
\date{}
\begin{document}
	\maketitle
	\newcommand\blfootnote[1]{%
		\begingroup
		\renewcommand\thefootnote{}\footnote{#1}%
		\addtocounter{footnote}{-1}%
		\par\setlength\parindent{2em}
		\endgroup
	}
	\captionsetup[figure]{labelfont={bf},labelformat={default},labelsep=period,name={Fig.}}
	\captionsetup[table]{labelfont={bf},labelformat={default},labelsep=period,name={Tab.}}
	
	\begin{abstract}
		We present a novel reduced-order pressure stabilization strategy based on continuous data assimilation(CDA) for two-dimensional incompressible Navier-Stokes equations. A feedback control term is incorporated into pressure-correction projection method to derive the Galerkin projection-based CDA proper orthogonal decomposition reduced order model(POD-ROM) that uses pressure modes as well as velocity’s simultaneously to compute the reduced-order solutions. The greatest advantage over this ROM is circumventing the standard discrete inf-sup condition for the mixed POD velocity-pressure spaces with the help of CDA which also guarantees the high accuracy of reduced-order solutions; moreover, the classical projection method decouples reduced-order velocity and pressure, which further enhances computational efficiency. Unconditional stability and convergence over POD modes(up to discretization error) are presented, and a benchmark test is performed to validate the theoretical results. \\
		
		\noindent {\bf Keywords: }{Reduced-order pressure stabilization; continuous data assimilation; pressure-correction projection method}\\
	\end{abstract}
	
	\baselineskip 15pt
	\parskip 10pt
	\setcounter{page}{1}
	
	\section{Introduction}
	\noindent Reduced Order Models(ROM) are surrogate models which are devised to replace full order models(FOM), e.g., finite element methods or finite volume methods, to deliver reasonably accurate results at a low computational cost. This is a case where the aim of time extrapolation is required to deal with long-time numerical simulations; or where multi-parameter scenarios occur when physical/geometric parameters are changing frequently. The ways to construct reduced-order spaces and to formulate projection-based ROM are generally including proper orthogonal decomposition(POD), greedy algorithm, etc., and Galerkin or Petrov-Galerkin projection. We herein adopt POD-ROM by Galerkin projection(G-POD-ROM), more details and extensions about ROM can be found in \cite{RBM-BOOK-1, RBM-BOOK-2}.  \\
	\indent ROM have been widely applied to many different fields; specifically in the field of fluids dominated by relatively few recurrent spatial structures, ROM have been successfully used as an efficient and effective numerical method to solve equations like  Euler\cite{Euler-POD-ROM-2021-JCP}, convection-diffusion-reaction\cite{SUPG-POD-ROM-2022-CMWA}, incompressible Navier-Stokes(NSE)\cite{Luo-POD-NS} and Rayleigh–Bénard convection\cite{RBC-POD-ROM-2019-AMM, RBC-POD-ROM-2020-ICHMT}, etc. In particular, when handling NSE in primitive variables with POD-ROM where mixed finite element method is used as direct numerical simulation(DNS), it is an open question that it is difficult to prove some certain compatibility condition between the data-based reduced-order mixed spaces, such as $LBB_r$ condition(a counterpart in POD-ROM of the mixed finite element inf-sup/$LBB_h$ condition), thus causing spurious numerical oscillations; while in some scenarios pressure variable is crucial and non-negligible\cite{POD-PressurePoissonEquation-JFM-2005}. To handle this, several techniques or methods have been proposed from different aspects, e.g. expanding the reduced-order velocity space \textit{a priori} based on the satisfaction of the $LBB_r$ condition by supremizer enrichment\cite{Supremizer-INFSUP-2013-NM, POD-Supremizer-2015-IJNME}, recovering reduced-order pressure \textit{a posteriori} by pressure Poisson equation(PPE)\cite{PPE-ROM-2009-TCFD, NS-POD-2014-JCP} or by pressure stabilized Petrov-Galerkin\cite{PSPG-POD-ROM-2022-CMAME}, adding pressure stabilization terms into G-POD-ROM\cite{SUPG-POD-ROM-2015-CMAME, VMS-POD-2019-ACM, Ali-PODInfSup-CMWA-2020, POD-LPS-SINUM-2021} or relaxing incompressible constraint by penalty methods/artificial compression methods/projection methods\cite{POD-AC-SINUM-2020, MyFirstPaper}. These methods have their strengths, but also some weaknesses; for example, using supremizer enrichment requires additional construction of exact or approximate supremizer space, and using PPE would introduce artificial boundary conditions, PSPG or other stabilization techniques will face choices of stabilization parameters; among those methods, the projection method, in the form of eliminating end-of-step velocity, is superior to others aforementioned on aspects that it decouples velocity-pressure variables and owns modular implementation with a fixed pressure-incremental parameter(i.e., $\Delta t$, the timestep size).  \\
	\indent Pressure-correction projection methods have become a classical method due to their decoupling, efficiency, and ease of implementation. A natural idea is: \textit{How to derive a pressure-correction projection ROM?} Data assimilation(DA) refers to a wide class of techniques for incorporating experimental/observational data into a simulation. We herein use a type of DA introduced in \cite{CDA-2014-JNS} and called continuous data assimilation(CDA) where a feedback control term is merged into the original model, and this term nudges simulated solutions towards experimental/observation data, thus improving the accuracy of numerical solutions or regularizing the original ill-posed physical model. The stability and convergence of CDA, at the DNS level, have been demonstrated in \cite{CDA-FEM-2019-CMAME, CDA-SINUM-2020} and Zerfas, Rebholz, Schneier, and Iliescu\cite{CDA-2019-CMAME} firstly incorporate this technique into ROM to enhance the accuracy of reduced-order solutions whose lack is also one of the main deficiencies of ROM and thus owns long-term attention in ROM community. Other similar works include \cite{CDA-POD-2022-JCAM} etc. We notice that there are not too many investigations about using CDA to help obtain high-accuracy reduced-order solutions, and the two successful combinations aforementioned both aim to reduced-order velocity by canceling pressure and then solving Burgers-type NSE ROM. Inspired by the statement in \cite{ROM-Clusures-2022} that in many important practical applications, the G-POD-ROM’s inaccuracy often manifests itself in the form of spurious numerical oscillations, we use CDA to enhance reduced-order pressure’s accuracy, and we find that, by doing so, the $LBB_r$ condition is circumvented and thus obtaining \textit{a priori} reduced-order pressure.  \\
	\indent In this manuscript, we propose, analyze and validate a CDA pressure-correction projection G-POD-ROM(CDA-Proj-POD-ROM). This correction version among projection methods allows us to obtain the desired highly accurate high-fidelity solution at a low computational complexity, thus saving the preparation time of the offline stage. The ROM, obtained by Galerkin projection, inherits the form of decoupling reduced-order velocity-pressure variables, thus avoiding the saddle-point system coming from solving the NSE numerical scheme. Velocity and pressure nudging terms are both incorporated into ROM, where the former is used to enhance the reduced-order velocity accuracy and thus keeping the reduced-order system energy stable; and the latter aims at avoiding the appearance of spurious numerical oscillations in a manner of improving reduced-order pressure accuracy, which finally allows obtaining high-accuracy reduced-order pressure simultaneously with velocity. To the best of the authors' knowledge, this is the first time that CDA is found to be served as a pressure stabilization technique in G-POD-ROM. The main contribution of this manuscript is to provide a novel, alternative way of obtaining stable, high-accuracy reduced-order pressure.  \\
	\indent The rest of the manuscript is organized as follows: In section \ref{section-2}, we introduce some preliminaries and notations necessary for our scheme and analysis. In section \ref{section-3}, we propose our CDA-Proj-POD-ROM and provide numerical analysis. In section \ref{section-4}, we perform a numerical experiment to validate our ROM. Finally, in section \ref{section-5}, we draw conclusions and outline our future directions.  \\ 
	\indent Throughout this paper, we use $C$ to denote a positive constant independent of $\Delta t$, $h$, not necessarily the same at each occurrence. Especially, $C(f)$ means this constant are relevant to the functions or parameter $f$.
	
	\section{Preliminaries and Notations}\label{section-2}
	\noindent Let $\Omega \subset \mathbb{R}^2$ is an open bounded domain with a sufficiently smooth boundary $\partial \Omega$. $L^p(\Omega),\, 1\leq p\leq \infty$, and its inner product $(\cdot,\cdot)$ and norm $\|\cdot\|$ are defined in usual ways. The Sobolev space $W^{m,p}(\Omega)$ and its norm $\|\cdot\|_{m,p}$ are also standard in the sense of \cite[p.23]{Brenner2008}. Furthermore, we use the abbreviation $H^m(\Omega) := W^{m,2}(\Omega)$ and $\|\cdot\|_m := \|\cdot\|_{m,2}$, it’s a Hilbert space with a scalar product. We denote by $H_0^1(\Omega)$ the space of functions of $H^1(\Omega)$ with vanishing trace on $\partial \Omega$ and by $H^{-1}(\Omega)$ its dual space. Vector analogs of the Sobolev spaces along with vector-valued functions are denoted by boldface letters, for instance, $\boldsymbol{H}^m(\Omega) := (H^m(\Omega))^2$.  \\
	\indent We consider the nonstationary incompressible Navier-Stokes equations(NSEs) in primitive variables with no-slip boundary conditions: 
	\begin{equation}\label{NSE}
		\begin{aligned}
			&\partial_t \boldsymbol{u} + \boldsymbol{u}\cdot\nabla\boldsymbol{u} - \nu\Delta \boldsymbol{u} + \nabla p = \boldsymbol{f},\quad\text{and}\quad\nabla\cdot \boldsymbol{u} = 0,\quad \text{in}\;\Omega\times(0,T],\\
			&\boldsymbol{u} = \boldsymbol{0},\;\text{on}\;\partial\Omega,\quad\boldsymbol{u}(0,x) = \boldsymbol{u}_{0}(x),\; \text{in}\;\partial\Omega.
		\end{aligned}
	\end{equation}
    The unknowns are the vector function $\boldsymbol{u}$(velocity) and the scalar function $p$(pressure), $\boldsymbol{f}=\boldsymbol{f}(x,t)$ is the known body force and $\nu$ is the kinematic viscosity.  \\
    \indent We consider the natural velocity space by $\boldsymbol{X} = \boldsymbol{H}^1_0(\Omega)$ and pressure space by $Q = L^2_0(\Omega)$, functions with vanishing integral on $\Omega$. In $H^1_0(\Omega)$, we have the Poincaré inequality \cite[p.135]{Brenner2008}: there exists a constant $C_{P}$ depending only on $\Omega$ such that for $\forall v\in H^1_0(\Omega)$,
    \begin{equation}\label{PoinIneq}
    	\begin{aligned}
    		    \|v\| \leq C_P\|\nabla v\|.
        \end{aligned}
    \end{equation}
    This inequality makes the semi-norm $|v|_1 := \|\nabla v\|$ in $H^1(\Omega)$ is a norm in $H^1_0(\Omega)$. Let ${\mathcal{T}_h}$ be a uniformly regular family of triangulation of $\overline{\Omega}$ in the sense of \cite[p.111]{Ciarlet-FEM-2002} and $h := \max_{K\in \mathcal{T}_h}\{h_K | h_K := \text{diam}(K)\}$. $(\boldsymbol{X}_h, Q_h) \subset (\boldsymbol{X}, Q)$ denotes the conforming and inf–sup stable finite element (FE) spaces. We herein consider $(\boldsymbol{X}_h, Q_h) = (\mathbb{\boldsymbol{P}}_k, \mathbb{P}_{k-1})$, and in our computation $k=2$. By the quasi-uniform of mesh, the following inverse inequality holds for $\forall \boldsymbol{v}_h\in \boldsymbol{X}_h$
    \begin{equation}\label{InveIneq}
    	\left\|\boldsymbol{v}_h\right\|_{W^{m, p}(K)} \leq C_{\mathrm{inv}} h_K^{n-m-d\left(\frac{1}{q}-\frac{1}{p}\right)}\left\|\boldsymbol{v}_h\right\|_{W^{n, q}(K)},
    \end{equation}
    where $0 \leq  n \leq  m \leq  1, 1 \leq  q \leq  p \leq  \infty $. This inequality also holds for $\forall q_h \in Q_h$. The convection term is adopted as the skew-symmetric convection form
    $$
    b(u, v, w) := \frac{1}{2}(u \cdot \nabla v, w) - \frac{1}{2}(u \cdot \nabla w, v) \quad \forall u, v, w \in X.
    $$
    An equivalent form of $b$ can be obtained immediately via
    $$
    b(u, v, w) = (u \cdot \nabla v, w) - \frac{1}{2}((\nabla\cdot u)  v, w) \quad \forall u, v, w \in X,
    $$
    and we will utilize the skew-symmetry property of $b$ as $b(u, v, v) = 0$ for $u,v\in X$. \\
	\indent In order to construct ROM, we use the finite element method(FEM) and semi-implicit backward differentiation formula of order 1(BDF1) for spatial and temporal to discretize the continuous variational NSE: For any $t > 0$, find $(\boldsymbol{u}(t),p(t)) \in (\boldsymbol{X},Q)$, such that for $\forall (\boldsymbol{v},q) \in (\boldsymbol{X},Q)$,
	\begin{equation}\label{ContVariNSEs}
		\left\{
		\begin{aligned}
			\left(\partial_t\boldsymbol{u}, \boldsymbol{v}\right) + \nu\left(\nabla \boldsymbol{u}, \nabla \boldsymbol{v}\right) + b\left(\boldsymbol{u},\boldsymbol{u},\boldsymbol{v}_h\right) - \left(p, \nabla \cdot\boldsymbol{v}\right) &=\left(f, \boldsymbol{v}\right), \\
			\left(\nabla \cdot \boldsymbol{u}, q\right) &= 0,
		\end{aligned}
		\right.
	\end{equation}
	to obtain the following scheme, denoted as the full-order model(FOM): Given initial condition $(\boldsymbol{u}_h^0,p^0_h,p^{-1}_h)$, where $\boldsymbol{u}_h^0 = \Pi \boldsymbol{u}_0$, $\Pi$ could be Lagrange interpolation or $L^2$ projection operator into FE space, and total numbers of time-steps $N$. Then, for $n=1,2,\cdots, N\!-\!1$ and $\forall (\boldsymbol{v}_h,q_h) \in (\boldsymbol{X}_h, Q_h)$, find $(\boldsymbol{u}^{n+1}_h,p^{n+1}_h) \in (\boldsymbol{X}_h, Q_h)$ satisfying 
	\begin{equation}\label{FEM-FOM}
		\left\{
		\begin{aligned}
			\frac{1}{\Delta t}\left(\boldsymbol{u}_h^{n+1}-\boldsymbol{u}_h^n, \boldsymbol{v}_h\right) + \nu\left(\nabla \boldsymbol{u}_h^{n+1}, \nabla \boldsymbol{v}_h\right) + b\left(\boldsymbol{u}_h^n,\boldsymbol{u}_h^{n+1},\boldsymbol{v}_h\right) + \left(\nabla\left(2 p_h^n-p_h^{n-1}\right), \boldsymbol{v}_h\right) &=\left(f^{n+1}, \boldsymbol{v}_h\right), \\
			\left(\nabla \cdot \boldsymbol{u}_h^{n+1}, q_h\right) + \Delta t\left(\nabla\left(p_h^{n+1}-p_h^n\right), \nabla q_h\right) &= 0.
		\end{aligned}
		\right.
	\end{equation}
    \begin{Remark}
    	For the choice of $p^{-1}_h$, we follow the suggestion in \cite{Proj-Guermond-1998-IJNMF} to select $p^{-1}_h = p^0_h$ in the computation.
    \end{Remark}
    \indent Given the choice that we will use the FE solutions as the snapshots to construct ROM in the forthcoming section, the approximation error between the exact solution and reduced-order solution, in this case, will consist of three parts: spatial, temporal and ROM discretization errors. To analyze the convergence of approximation, we need the following regularity assumptions on the NSE, which may not be valid in some realistic situations, however, we still do it simply for the sake of theoretical analysis.
    \begin{Assumption}
    	We assume that the solution of the NSE satisfies
    	$$
    	\begin{aligned}
    		& \boldsymbol{u} \in L^{\infty}\left(0, T ; \boldsymbol{H}^1(\Omega)\right) \cap H^1\left(0, T ; \boldsymbol{H}^{k+1}(\Omega)\right) \cap H^2\left(0, T ; \boldsymbol{H}^1(\Omega)\right) \\
    		& p \in L^2\left(0, T ; H^{k+1}(\Omega)\right), \; \boldsymbol{f} \in L^2\left(0, T ; \boldsymbol{L}^2(\Omega)\right) .
    	\end{aligned}
        $$
    \end{Assumption}
    Based on the above assumption about continuous solutions, the following lemma represents the first two errors which has been analyzed and verified in FE community, e.g. \cite{Proj-Guermond-1998-IJNMF}.
    \begin{Lemma}[\!\!\cite{Proj-Guermond-1998-IJNMF}]\label{Conv-Exact-FE}
    	The FE solution in FOM-FOM (\ref{FEM-FOM}) satisfies the following optimal error estimates.
    	\begin{equation}
    	\begin{aligned}
    		\max _{0 \leq n \leq N}\left\|\boldsymbol{u}_h^n-\boldsymbol{u}\left(t^n\right)\right\| &\leq C(\boldsymbol{u}, p)\left(\Delta t+h^{k+1}\right), \\
    		\max _{0 \leq n \leq N}\left\|\boldsymbol{u}_h^n-\boldsymbol{u}\left(t^n\right)\right\|_1 + \max _{0 \leq n \leq N}\left\|p_h^n-p\left(t^n\right)\right\| &\leq C(\boldsymbol{u}, p)\left(\Delta t+h^{k}\right).
    	\end{aligned}
        \end{equation}
    \end{Lemma}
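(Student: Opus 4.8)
\noindent The plan is to carry out the standard fully-discrete error analysis of incremental pressure-correction schemes; since the resulting estimates are exactly those of \cite{Proj-Guermond-1998-IJNMF}, I only describe its structure. The first ingredient is the a priori stability of \eqref{FEM-FOM}, obtained by testing the momentum equation with $\boldsymbol{u}_h^{n+1}$ and the discrete continuity equation with a matching pressure test function, using $b(\boldsymbol{u}_h^n,\boldsymbol{u}_h^{n+1},\boldsymbol{u}_h^{n+1})=0$ together with summation-by-parts in time on the pressure terms, and summing over $n$; this gives a bound on $\boldsymbol{u}_h^n$ in $\ell^\infty(\boldsymbol{L}^2)\cap\ell^2(\boldsymbol{H}^1)$ and a $\Delta t$-weighted bound on $\nabla p_h^n$, uniform in $h$ and $\Delta t$, which is what keeps the nonlinear term controllable below.

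Next I would split the errors through the Stokes projection $(\boldsymbol{S}_h\boldsymbol{u}(t),R_hp(t))\in(\boldsymbol{X}_h,Q_h)$, which by the inf--sup stability of $(\boldsymbol{X}_h,Q_h)$ and Assumption 2.1 obeys the optimal bounds $\|\boldsymbol{u}-\boldsymbol{S}_h\boldsymbol{u}\|+h\|\nabla(\boldsymbol{u}-\boldsymbol{S}_h\boldsymbol{u})\|\le Ch^{k+1}$, $\|p-R_hp\|\le Ch^{k}$ (and the same with $\partial_t$). Writing $\boldsymbol{u}_h^n-\boldsymbol{u}(t^n)=\boldsymbol{\phi}^n-\boldsymbol{\eta}^n$ and $p_h^n-p(t^n)=\xi^n-\zeta^n$ with $\boldsymbol{\phi}^n=\boldsymbol{u}_h^n-\boldsymbol{S}_h\boldsymbol{u}(t^n)$, $\xi^n=p_h^n-R_hp(t^n)$, and using the defining orthogonality of the Stokes projection (which trades the viscous projection error for a pressure projection error and annihilates the divergence projection error), subtracting \eqref{FEM-FOM} from \eqref{ContVariNSEs} at $t^{n+1}$ gives the error system for $(\boldsymbol{\phi}^{n+1},\xi^{n+1})$; its right-hand side collects the BDF1 time-truncation term (of size $\Delta t$, bounded after Cauchy--Schwarz by $\Delta t$ times an $L^2$-in-time norm of $\boldsymbol{u}_{tt}$), the divided difference of $\boldsymbol{\eta}$ (of size $h^{k+1}$, using $\boldsymbol{u}\in H^1(0,T;\boldsymbol{H}^{k+1})$), the pressure-extrapolation defect $\nabla\!\big(p(t^{n+1})-2p(t^n)+p(t^{n-1})\big)$, and in the discrete continuity equation the artificial-compressibility terms $\Delta t(\nabla(\xi^{n+1}-\xi^n),\nabla q_h)$ and $\Delta t(\nabla(\zeta^{n+1}-\zeta^n),\nabla q_h)$.

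Then I would test the momentum error equation with $\boldsymbol{v}_h=\Delta t\,\boldsymbol{\phi}^{n+1}$ and combine it with the continuity error equation tested against an appropriate pressure-error quantity, so that the pressure--velocity coupling collapses --- up to a BDF-type rearrangement forced by the pressure-gradient extrapolation, and up to lower-order projection residuals --- into a nonnegative telescoping expression in the discrete pressure-error gradient $\nabla\xi^{n}$. The nonlinear contribution is decomposed into convecting-field, convected-field and time-lag pieces, each controlled by the trilinear bound $|b(a,b,c)|\le C\|\nabla a\|\,\|\nabla b\|\,\|\nabla c\|$ together with the Step~1 bounds and the regularity of $\boldsymbol{u}$, and every term carrying a $\nabla\boldsymbol{\phi}^{n+1}$ factor is absorbed into $\tfrac{\nu}{2}\|\nabla\boldsymbol{\phi}^{n+1}\|^2$. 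Summing with $\boldsymbol{\phi}^0=0$ and applying the discrete Gronwall lemma gives $\|\boldsymbol{\phi}^{n}\|^2+\nu\Delta t\sum_{m\le n}\|\nabla\boldsymbol{\phi}^{m}\|^2\le C(\boldsymbol{u},p)(\Delta t+h^{k})^2$, which with the projection estimates and the triangle inequality is the $H^1$-velocity bound; keeping the velocity test function in $L^2$ only, so that the velocity projection error enters at order $h^{k+1}$ (the one $(\zeta^{n+1},\nabla\!\cdot\boldsymbol{\phi}^{n+1})$-type term being handled by a short duality argument), sharpens this to the optimal $\|\boldsymbol{u}_h^n-\boldsymbol{u}(t^n)\|=O(\Delta t+h^{k+1})$. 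For the pressure I would use the discrete inf--sup condition, $\beta\|\xi^{n+1}\|\le\sup_{\boldsymbol{v}_h\in\boldsymbol{X}_h}(\xi^{n+1},\nabla\!\cdot\boldsymbol{v}_h)/\|\nabla\boldsymbol{v}_h\|$, reading $(\xi^{n+1},\nabla\!\cdot\boldsymbol{v}_h)$ off the momentum error equation in terms of the discrete time derivative of $\boldsymbol{\phi}$, the viscous, nonlinear and consistency residuals, and $\nabla(\xi^{n+1}-\xi^n)$.

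The step I expect to be the main obstacle is this last one: the plain energy estimate controls only the $\Delta t$-weighted quantity $\Delta t\,\|\nabla\xi^n\|$, which does not bound $\|p_h^n-p(t^n)\|$ in $L^2$ on its own; to reach the stated $O(\Delta t+h^k)$ rate one must estimate the discrete time derivative of the velocity error to full first order in $\Delta t$, which forces a second, differenced energy estimate (testing the error equation with $\boldsymbol{\phi}^{n+1}-\boldsymbol{\phi}^n$). That is where most of the technical work lies, where the regularity $\boldsymbol{u}\in H^2(0,T;\boldsymbol{H}^1)$ of Assumption 2.1 is consumed, and where the nonlinear term must again be absorbed using the uniform bounds of Step~1, possibly with a short induction on $n$.
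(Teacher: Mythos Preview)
The paper does not prove this lemma at all: it is stated with the citation \cite{Proj-Guermond-1998-IJNMF} attached to the lemma header, and the surrounding text says only that the estimate ``has been analyzed and verified in FE community, e.g.\ \cite{Proj-Guermond-1998-IJNMF}.'' So there is no in-paper proof to compare against; the authors simply import the result from the literature.

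Your sketch is the standard route taken in that literature for incremental pressure-correction schemes and is broadly correct: stability first, then error splitting through the Stokes projection, energy estimate on the discrete error with the pressure coupling rewritten into a telescoping form, Gronwall, and finally the inf--sup argument for the pressure. You also correctly flag the genuine technical obstacle, namely that the basic energy step yields only $\ell^2(H^1)$ control on the velocity error and a $\Delta t$-weighted bound on $\nabla\xi^n$, so reaching the \emph{pointwise-in-time} $H^1$ velocity bound and the $L^2$ pressure bound stated in the lemma requires the second, time-differenced estimate (test with $\boldsymbol{\phi}^{n+1}-\boldsymbol{\phi}^n$) to control $\Delta t^{-1}(\boldsymbol{\phi}^{n+1}-\boldsymbol{\phi}^n)$ in $\ell^\infty(L^2)$; this is exactly where the $H^2(0,T;\boldsymbol{H}^1)$ regularity in Assumption~2.1 is spent. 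One small slip: after the first Gronwall step you write ``which with the projection estimates and the triangle inequality is the $H^1$-velocity bound,'' but at that stage you only have $\ell^2(H^1)$, not $\ell^\infty(H^1)$; the upgrade to $\max_n\|\cdot\|_1$ comes only after the differenced estimate you describe in the last paragraph, so that sentence should be moved or qualified.
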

    \indent The following symbols and lemmas will be used later
    $$
    \|\cdot\|_{\ell^2(L^2)} := \left(\Delta t\sum\|\cdot\|^2\right)^{\frac12}, \quad \|\cdot\|_{L^2(L^2)} := \left(\int^{T}_{0}\|\cdot\|^2\right)^{\frac12}
    $$
	\indent In the forthcoming convergence analysis about reduced-order solutions, we will bound $\|u^n_h\|_{W^{s,\infty}}$, $s=0,1$. To this end, we use modified Stokes projection $s_h$ to serve as an intermediate variable, which, compared to the classical Stokes projection\cite{Heywood-Rannacher-1982-SINUM-3}, is introduced in \cite{Frutos-ModifiedStokesProj-2016-JSC} and aims to avoid the appearance of $1/\nu$ in the upper-bound of projection error. Specifically, let $(u,p)$ be the solution of the continuous Navier-Stokes equation (\ref{NSE}) with $\boldsymbol{u}\in \boldsymbol{V}\cap \boldsymbol{H}^{k+1}$, $p\in Q\cap H^{k}$, $k\geq 1$, for $t\geq 0$, thus the pair $(\boldsymbol{u},0)$ is the solution of the Stokes problem
	$$
	\left\{
	\begin{aligned}
		-\nu\Delta \boldsymbol{\tilde{u}} + \nabla \tilde{p} &= g, \\
		\nabla\cdot \boldsymbol{\tilde{u}} &= 0.
	\end{aligned}\right.
	$$
	with the right-hand side 
	$$
	\boldsymbol{g} = \boldsymbol{f} - \partial_t \boldsymbol{u} - \boldsymbol{u}\cdot\nabla\boldsymbol{u} - \nabla p.
	$$
	\indent Multiplying test function by the above Stokes equation, integrating by part and discretizing by inf-sup stable mixed finite element $(\boldsymbol{X}_h,Q_h)$, we get the discrete solution and denote the velocity component as $\boldsymbol{s}_h$; then, the following bounds and stability hold
	\begin{Lemma}[Error and stability estimates for modified Stokes projection\cite{POD-LPS-SINUM-2021}]
		For $\forall t \geq 0$, $(u(t),p(t))$ denote the solutions of NSEs (\ref{NSE}), $\boldsymbol{s}_h$ is the mixed FE velocity solution defined above. The following bounds are hold
		$$
		\| \boldsymbol{u} - \boldsymbol{s}_h \|_1 \leq Ch^k\|\boldsymbol{u}\|_{k+1}.
		$$
		and 
		$$
		\begin{aligned}
			\left\|\boldsymbol{s}_h\right\|_{\infty} & \leq C_{1,S}(\|\boldsymbol{u}\|\|\boldsymbol{u}\|_2)^{1/2}. \\
			\left\|\nabla \boldsymbol{s}_h\right\|_{\infty} & \leq C_{2,S}\|\nabla \boldsymbol{u}\|_{\infty}.
		\end{aligned}
	    $$
	    where three constants $C,C_{1,S},C_{2,S}$ are all independent on $\nu$.
	\end{Lemma}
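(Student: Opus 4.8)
The plan is to treat the three estimates in turn, exploiting throughout the one structural fact that makes every constant independent of $\nu$: the \emph{continuous} Stokes pair attached to $\boldsymbol{u}$ is $(\boldsymbol{u},0)$, whose pressure vanishes, so every term that in the classical Stokes projection produces the notorious factor $\nu^{-1}\|p\|$ is simply absent. Full details are those of \cite{POD-LPS-SINUM-2021}, the modified projection itself originating in \cite{Frutos-ModifiedStokesProj-2016-JSC}; below I only indicate the skeleton.

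First, for the $H^1$ estimate, let $(\boldsymbol{s}_h,\xi_h)\in(\boldsymbol{X}_h,Q_h)$ be the discrete solution, i.e. $\nu(\nabla\boldsymbol{s}_h,\nabla\boldsymbol{v}_h)-(\xi_h,\nabla\cdot\boldsymbol{v}_h)=(\boldsymbol{g},\boldsymbol{v}_h)$ and $(\nabla\cdot\boldsymbol{s}_h,r_h)=0$ for all $(\boldsymbol{v}_h,r_h)\in(\boldsymbol{X}_h,Q_h)$. Since $(\boldsymbol{u},0)$ solves the same continuous Stokes problem with the same data $\boldsymbol{g}$ and $\nabla\cdot\boldsymbol{u}=0$, subtracting yields the mixed Galerkin orthogonality $\nu(\nabla(\boldsymbol{u}-\boldsymbol{s}_h),\nabla\boldsymbol{v}_h)+(\xi_h,\nabla\cdot\boldsymbol{v}_h)=0$ and $(\nabla\cdot(\boldsymbol{u}-\boldsymbol{s}_h),r_h)=0$. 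I would then invoke the standard Brezzi/Céa bound for the inf--sup stable pair $(\boldsymbol{X}_h,Q_h)$:
\[
\|\boldsymbol{u}-\boldsymbol{s}_h\|_1 \;\le\; (1+\|a\|/\alpha_0)\inf_{\boldsymbol{v}_h\in\boldsymbol{X}_h}\|\boldsymbol{u}-\boldsymbol{v}_h\|_1 \;+\; (\|b\|/\alpha_0)\inf_{r_h\in Q_h}\|0-r_h\|,
\]
where $a(\cdot,\cdot)=\nu(\nabla\cdot,\nabla\cdot)$ has $\|a\|=\nu$ and, by Poincaré (\ref{PoinIneq}), discrete-kernel coercivity constant $\alpha_0=\nu/C_P^2$, so $\|a\|/\alpha_0=C_P^2$ is $\nu$-free, while the second infimum is exactly $0$ because the exact pressure is $0$. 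The approximation property of $(\boldsymbol{P}_k,\mathbb{P}_{k-1})$ then gives $\inf_{\boldsymbol{v}_h}\|\boldsymbol{u}-\boldsymbol{v}_h\|_1\le Ch^k\|\boldsymbol{u}\|_{k+1}$, which is the first bound.

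Second, for the pointwise bounds: because the companion continuous pressure is zero, the map $\boldsymbol{u}\mapsto\boldsymbol{s}_h$ behaves like a vector elliptic projection, and the maximum-norm stability estimates for finite-element projections onto inf--sup stable Stokes pairs apply with $\nu$-independent constants, giving $\|\boldsymbol{s}_h\|_\infty\le C\|\boldsymbol{u}\|_\infty$ and $\|\nabla\boldsymbol{s}_h\|_\infty\le C\|\nabla\boldsymbol{u}\|_\infty$. If one prefers to avoid invoking these sharp results, an inverse-inequality argument also works: insert an interpolant $I_h\boldsymbol{u}$, use (\ref{InveIneq}) to trade $L^\infty$ for $L^2$ at cost $h^{-1}$, and close with the $L^2$ projection-error bound $\|\boldsymbol{u}-\boldsymbol{s}_h\|\le Ch^2\|\boldsymbol{u}\|_2$ obtained from an Aubin--Nitsche duality argument on the modified Stokes problem (again $\nu$-free, since primal and dual pressures contribute nothing), which yields the slightly coarser but still sufficient $\|\boldsymbol{s}_h\|_\infty\le C\|\boldsymbol{u}\|_2$. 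Finally, Agmon's inequality in two dimensions, $\|\boldsymbol{u}\|_\infty\le C\|\boldsymbol{u}\|^{1/2}\|\boldsymbol{u}\|_2^{1/2}$, converts $\|\boldsymbol{s}_h\|_\infty\le C\|\boldsymbol{u}\|_\infty$ into the second stated bound, while $\|\nabla\boldsymbol{s}_h\|_\infty\le C\|\nabla\boldsymbol{u}\|_\infty$ is the third.

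The energy estimate of the first step is textbook; the real work is the maximum-norm control. Pointwise stability and error estimates for mixed (Stokes) elements are substantially more delicate than their $H^1$ counterparts — they rest on weighted-norm or local Nitsche-type duality arguments and, for some elements, carry a harmless $|\ln h|$ factor — so the proof essentially imports these results from the maximum-norm finite-element literature. The second point requiring care is bookkeeping the $\nu$-dependence at every step: one must consistently work with the pair $(\boldsymbol{u},0)$ rather than $(\boldsymbol{u},p)$ so that no pressure term ever enters, which is precisely the design purpose of the modified Stokes projection.
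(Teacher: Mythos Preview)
The paper does not supply a proof of this lemma at all: it is stated with a citation to \cite{POD-LPS-SINUM-2021} (and, for the projection itself, to \cite{Frutos-ModifiedStokesProj-2016-JSC}) and used as a black box. So there is no in-paper argument to compare against, and your sketch goes well beyond what the authors provide.

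That said, your outline is sound and identifies the essential mechanism correctly: because the continuous pair associated with $\boldsymbol{u}$ in the modified Stokes problem is $(\boldsymbol{u},0)$, the pressure best-approximation term in the Brezzi/C\'ea estimate vanishes, and the remaining constant $\|a\|/\alpha_0=C_P^2$ is indeed $\nu$-free. The use of Agmon's inequality in 2D to obtain the $(\|\boldsymbol{u}\|\|\boldsymbol{u}\|_2)^{1/2}$ form, and the acknowledgement that the $W^{1,\infty}$ stability of the discrete Stokes projection is the genuinely hard ingredient imported from the maximum-norm finite-element literature, are both on target. Your alternative inverse-inequality route for $\|\boldsymbol{s}_h\|_\infty$ is a useful fallback, though it delivers only $C\|\boldsymbol{u}\|_2$ rather than the sharper interpolation form stated. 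In short: nothing is wrong with your plan; it simply supplies an argument where the paper is content to cite one.
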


    \begin{Lemma}[Discrete Gronwall Lemma\cite{Heywood-Rannacher-1982-SINUM-4}]\label{Gronwall}
    	Let $k, B$, and $a_j, b_j, c_j, \gamma_j$, for integers $j \geq 0$, be non-negative numbers such that
    $$
    a_n+k \sum_{j=0}^n b_j \leq k \sum_{j=0}^n \gamma_j a_j+k \sum_{j=0}^n c_j+B \quad \text { for } n \geq 0 .
    $$
    Suppose that $k \gamma_j<1$, for all $j$, and set $\sigma_j = \left(1-k \gamma_j\right)^{-1}$. Then,
    $$
    a_n+k \sum_{j=0}^n b_j \leq \exp \left(k \sum_{j=0}^n \sigma_j \gamma_j\right)\left\{k \sum_{j=0}^n c_j+B\right\} \quad \text { for } n \geq 0.
    $$
    \end{Lemma}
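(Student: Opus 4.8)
The plan is to run the classical induction argument for discrete Gronwall inequalities, organized around the auxiliary quantities $P_n := a_n + k\sum_{j=0}^n b_j$ and $E_n := k\sum_{j=0}^n c_j + B$. First I would record two elementary monotonicity facts used throughout: $E_n$ is non-decreasing in $n$ (the $c_j$ and $B$ being non-negative), and $\sigma_j = (1-k\gamma_j)^{-1} \geq 1$, so every partial product $\prod_{i=0}^m \sigma_i$ is non-decreasing in $m$. In this notation the hypothesis reads $P_n \leq k\sum_{j=0}^n \gamma_j a_j + E_n$.

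The first step is to absorb the top-index term. Splitting off $j=n$ and using $a_n \leq P_n$ (the $b_j$ are non-negative) gives $(1-k\gamma_n)P_n \leq k\sum_{j=0}^{n-1}\gamma_j a_j + E_n \leq k\sum_{j=0}^{n-1}\gamma_j P_j + E_n$, and since $1-k\gamma_n>0$ we may divide to reach the recursion $P_n \leq \sigma_n\bigl(k\sum_{j=0}^{n-1}\gamma_j P_j + E_n\bigr)$. This recursion is what I would iterate.

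The core of the argument is to prove, by strong induction on $n$, the sharpened bound $P_n \leq \bigl(\prod_{i=0}^n \sigma_i\bigr)E_n$. Assuming it for all indices $<n$, one substitutes $P_j \leq \bigl(\prod_{i=0}^j \sigma_i\bigr)E_j \leq \bigl(\prod_{i=0}^j \sigma_i\bigr)E_n$ into the recursion, and is then left needing the purely algebraic identity $1 + k\sum_{j=0}^{n-1}\gamma_j \prod_{i=0}^{j}\sigma_i = \prod_{i=0}^{n-1}\sigma_i$, which itself follows by a short induction from the one-line identity $1 + k\gamma_j\sigma_j = \sigma_j$. Together these yield $(1-k\gamma_n)P_n \leq \bigl(\prod_{i=0}^{n-1}\sigma_i\bigr)E_n$, i.e. the claimed bound at level $n$. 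Finally I would pass from the product to the exponential via the elementary estimate $-\ln(1-x) \leq x/(1-x)$ for $x\in[0,1)$, applied with $x=k\gamma_i$: summing over $i$ gives $\prod_{i=0}^n\sigma_i = \exp\bigl(-\sum_{i=0}^n\ln(1-k\gamma_i)\bigr) \leq \exp\bigl(k\sum_{i=0}^n\sigma_i\gamma_i\bigr)$, and multiplying through converts the intermediate bound into the stated inequality.

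I expect the main obstacle to be exactly that inductive step: one must resist bounding $1-k\sum_{j<n}\gamma_j$ from below, since that quantity can be negative and the naive estimate then fails, and instead choose the \emph{right} monotone surrogate — the partial product $\prod_{i}\sigma_i$ rather than the exponential directly — so that the telescoping identity $1+k\gamma_j\sigma_j=\sigma_j$ lets the induction close exactly. The remaining ingredients (the monotonicity bookkeeping, the base case $n=0$ where $\sigma_0 E_0 \leq e^{k\sigma_0\gamma_0}E_0$, and the final logarithm estimate) are routine.
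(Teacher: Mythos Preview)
Your argument is correct and is essentially the standard proof (as in Heywood--Rannacher): absorb the $j=n$ term, induct on the product bound $P_n\le\bigl(\prod_{i=0}^n\sigma_i\bigr)E_n$ via the telescoping identity $k\gamma_j\prod_{i\le j}\sigma_i=\prod_{i\le j}\sigma_i-\prod_{i<j}\sigma_i$, and then pass to the exponential with $-\ln(1-x)\le x/(1-x)$. Note, however, that the paper does not give its own proof of this lemma at all---it is stated with a citation and then invoked in the convergence analysis---so there is no in-paper argument to compare against; your write-up simply supplies what the reference would.
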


    \subsection{ROM preliminaries}
    \indent We will consider in this manuscript the proper orthogonal decomposition(POD) technique to generate the reduced-order solutions. To this end, we need to first run the FEM-FOM(\ref{FEM-FOM}) on some given time interval and thus collect its high-fidelity solutions to form the velocity and pressure snapshot matrices 
    $$
    \mathcal{U} :=\left\langle\boldsymbol{u}_h^{1}, \boldsymbol{u}_h^{2}, \cdots, \boldsymbol{u}_h^{M}\right\rangle,\quad\mathcal{P} :=\left\langle p_h^{1}, p_h^{2}, \cdots, p_h^{M}\right\rangle,
    $$
    for $M$ different discrete time instances and the two dimensions are $d_u,d_p$. It is also a common practice to include in the snapshots matrices the difference quotients(DQs) $\partial f^{i} := (f^i - f^{i-1})/\Delta t$, $i=2,3,\cdots,M$, for some function $f$ at discrete time instance $t=t_i$. For brevity, we only consider the case of snapshots without DQs. As for the effects on ROM of adding DQs, we refer to \cite{Iliescu-Wang-2014-SISC}; the necessity of having DQs for obtaining the piece-wise optimal projection error has been confirmed in \cite{Koc-TimePointwise-SINUM-2021}. We adopt $L^2$ inner product for both velocity and pressure to formulate correlation matrices $K_u=\left(\left(k^u_{i, j}\right)\right) \in \mathbb{R}^{M \times M}$ and $K_p=\left(\left(k^p_{i, j}\right)\right) \in \mathbb{R}^{M \times M}$ where:
    $$
    k^u_{i, j} := \frac{1}{M}(\boldsymbol{u}_h^{i}, \boldsymbol{u}_h^{j}),\quad k^p_{i, j} := \frac{1}{M}(\boldsymbol{p}_h^{i}, \boldsymbol{p}_h^{j}).
    $$
    The POD aims to find a low-dimensional basis that approximates these snapshots optimally with respect to least-square residual norm. Following \cite{Galerkin-POD-NM-2001}, a singular value decomposition(SVD) is carried out and the leading generalized eigenfunctions are chosen as bases, referred to as the POD bases. We denote by $\lambda_1 \geq \lambda_2 \geq \cdots \geq \lambda_{d_u}>0$ the positive eigenvalues of $K_u$ and $\boldsymbol{x}_1, \boldsymbol{x}_2, \ldots, \boldsymbol{x}_{d_{u}} \in \mathbb{R}^{M}$ the associated eigenvectors. Analogously, $\left\{\theta_i, \boldsymbol{y}_i\right\}_{i=1}^{d_p}$ are the eigen-pairs of $K_p$. Then, the (orthonormal) POD basis can be written explicitly as
    $$
    \boldsymbol{\varphi}_k = \frac{1}{\sqrt{M\lambda_k}} \sum_{j=1}^M x_k^j \boldsymbol{u}^j_h,\quad
    \psi_k = \frac{1}{\sqrt{M\theta_k}} \sum_{j=1}^M y_k^j p^j_h.
    $$
    We choose $r_u=r_p=r$ for simplicity, dimensions of reduced-order spaces, to span the reduced-order velocity space and pressure space
    $$
    \boldsymbol{X}_r=\left\langle\boldsymbol{\varphi}_1, \boldsymbol{\varphi}_2, \cdots, \boldsymbol{\varphi}_r\right\rangle, \quad Q_r=\left\langle\psi_1, \psi_2, \cdots, \psi_r\right\rangle.
    $$
    \indent After obtaining the low-dimensional spaces above, the FEM-FOM(\ref{FEM-FOM}) is projected by Galerkin projection into $(\boldsymbol{X}_r,Q_r)$ to gain the following plain POD-ROM(G-POD-ROM):
    \begin{equation}\label{G-POD-ROM}
    	\left\{
    	\begin{aligned}
    		\frac{1}{\Delta t}\left(\boldsymbol{u}_r^{n+1}-\boldsymbol{u}_r^n, \boldsymbol{v}_r\right) + \nu\left(\nabla \boldsymbol{u}_r^{n+1}, \nabla \boldsymbol{v}_r\right) + b\left(\boldsymbol{u}_r^n,\boldsymbol{u}_r^{n+1},\boldsymbol{v}_r\right) + \left(\nabla\left(2 p_r^n-p_r^{n-1}\right), \boldsymbol{v}_r\right) &=\left(\boldsymbol{f}^{n+1}, \boldsymbol{v}_r\right), \\
    		\left(\nabla \cdot \boldsymbol{u}_r^{n+1}, q_r\right) + \Delta t\left(\nabla\left(p_r^{n+1}-p_r^n\right), \nabla q_r\right) &= 0.
    	\end{aligned}
    	\right.
    \end{equation}
    
    \begin{Definition}
    	We define the following $L^2$ orthogonal projection operator $\Pi^u_r,\Pi^p_r$
    	$$
    	\begin{aligned}
    		(\boldsymbol{u} - \Pi^u_r\boldsymbol{u}, \boldsymbol{v}_r) = 0,\;\forall \boldsymbol{v}_r\in \boldsymbol{V}_r, \quad (p - \Pi^p_rp, q_r) = 0,\;\forall q_r\in Q_r.
    	\end{aligned}
    	$$
    \end{Definition}
    \indent The $W^{s,\infty}$, $s=0,1$ stability can be obtained by the following lemma whose proof is given in Appendix.
    \begin{Lemma}\label{Lemma-Stab}
    	The $L^2$ orthogonal projection operator $\Pi^u_r$ defined above is stable in $W^{s,\infty}$, $s=0,1$, for $\forall \boldsymbol{v}_h\in \boldsymbol{X}_h$
    	\begin{equation}\label{Stab-L2-Piu}
    		\|\Pi^u_r\boldsymbol{v}_h\|_{L^{\infty}} \leq C_{stab,0} \quad\text{and}\quad \|\nabla\Pi^u_r\boldsymbol{v}_h\|_{L^{\infty}} \leq C_{stab,1}.
    	\end{equation}
    \end{Lemma}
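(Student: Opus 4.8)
The plan is to establish the two bounds in \eqref{Stab-L2-Piu} by exploiting the explicit representation of $\Pi^u_r$ in terms of the POD basis together with the eigenvalue/eigenvector structure of the correlation matrix $K_u$, and then converting $L^2$-control into $L^\infty$-control via the inverse inequality \eqref{InveIneq} applied on the finite element space $\boldsymbol{X}_h$ (which is legitimate since each $\boldsymbol{\varphi}_k\in\boldsymbol{X}_h$ and hence $\Pi^u_r\boldsymbol{v}_h\in\boldsymbol{X}_h$). First I would write, for $\boldsymbol{v}_h\in\boldsymbol{X}_h$, the expansion $\Pi^u_r\boldsymbol{v}_h=\sum_{k=1}^r(\boldsymbol{v}_h,\boldsymbol{\varphi}_k)\boldsymbol{\varphi}_k$, so that $\|\Pi^u_r\boldsymbol{v}_h\|^2=\sum_{k=1}^r(\boldsymbol{v}_h,\boldsymbol{\varphi}_k)^2\le\|\boldsymbol{v}_h\|^2$; this gives the trivial $L^2$-stability that underlies everything. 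The real work is to get the $L^\infty$ and $W^{1,\infty}$ estimates uniformly in $r$ (and in $\Delta t,h$), which is why the statement is phrased with constants $C_{stab,0},C_{stab,1}$ rather than $h$-dependent ones.

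The key technical device I would use is the standard POD inverse-type estimate: for any $\boldsymbol{w}_r=\sum_{k=1}^r a_k\boldsymbol{\varphi}_k\in\boldsymbol{X}_r$ one has $\|\nabla\boldsymbol{w}_r\|^2\le\|S_u\|_2\,\|\boldsymbol{w}_r\|^2$, where $S_u$ is the stiffness matrix of the POD modes; more useful here is the pointwise version obtained by writing each mode via the snapshots, $\boldsymbol{\varphi}_k=\frac{1}{\sqrt{M\lambda_k}}\sum_{j=1}^M x_k^j\boldsymbol{u}_h^j$. Plugging this into $\Pi^u_r\boldsymbol{v}_h=\sum_{k=1}^r(\boldsymbol{v}_h,\boldsymbol{\varphi}_k)\boldsymbol{\varphi}_k$ and using orthonormality of the eigenvectors $\boldsymbol{x}_k$ together with $\lambda_k\ge\lambda_r$, I would bound $\|\Pi^u_r\boldsymbol{v}_h\|_{L^\infty}$ by a combination of $\|\boldsymbol{v}_h\|$ and $\max_j\|\boldsymbol{u}_h^j\|_{L^\infty}$ (and similarly with $\nabla$), the latter being controlled uniformly through the modified Stokes projection $\boldsymbol{s}_h$: the splitting $\boldsymbol{u}_h^j=(\boldsymbol{u}_h^j-\boldsymbol{s}_h(t^j))+\boldsymbol{s}_h(t^j)$ lets us use Lemma~2.3 for $\|\boldsymbol{s}_h\|_\infty,\|\nabla\boldsymbol{s}_h\|_\infty$ and Lemma~2.2 plus the FOM error estimate to handle the discrete remainder, with an inverse inequality absorbing the negative power of $h$ against the $O(\Delta t+h^k)$ error under the usual mild mesh condition. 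Since $\boldsymbol{v}_h$ in the intended application is itself a snapshot (or $\boldsymbol{u}_h^n$), $\|\boldsymbol{v}_h\|$ is uniformly bounded by the a priori energy estimate for the FOM, so all constants collapse to quantities depending only on $\boldsymbol{u}$, $\nu$, $\Omega$, $T$, and the POD data, but not on $r$.

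The main obstacle I anticipate is making the $r$-independence genuinely rigorous: naively estimating mode by mode produces a factor like $\sum_{k=1}^r\lambda_k^{-1/2}\|\boldsymbol{x}_k\|_{\ell^1}\cdots$ that can blow up with $r$, so one must instead keep the sums in the eigenvector index packaged as $\ell^2$ inner products and invoke $\sum_k x_k^i x_k^j=\delta_{ij}$ (completeness of eigenvectors, or Bessel for the truncated sum) \emph{before} taking norms — i.e. one should recognize $\Pi^u_r$ composed with the snapshot synthesis map as an $\ell^2$-orthogonal-projection-like object and only then pull out $\max_j\|\boldsymbol{u}_h^j\|_{W^{s,\infty}}$. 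A second, milder subtlety is that the $W^{1,\infty}$ bound really needs the quasi-uniformity of $\mathcal{T}_h$ and Lemma~2.3's $\nu$-independent $\|\nabla\boldsymbol{s}_h\|_\infty\le C_{2,S}\|\nabla\boldsymbol{u}\|_\infty$ (the whole point of using the modified rather than classical Stokes projection), so I would be careful to route the gradient estimate through $\boldsymbol{s}_h$ and not directly through an inverse inequality on $\boldsymbol{u}_h^j$, which would cost an extra $h^{-1}$. With these two points handled, the remaining steps are routine applications of Cauchy–Schwarz, \eqref{InveIneq}, Lemma~\ref{Conv-Exact-FE}, and the boundedness of the snapshot family.
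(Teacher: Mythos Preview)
Your approach is workable but takes a genuinely different route from the paper's, and the paper's is considerably simpler. Rather than bound $\|\Pi^u_r\boldsymbol{u}_h^n\|_{L^\infty}$ directly through the snapshot representation of the modes, the paper uses the elementary splitting
\[
\|\Pi^u_r\boldsymbol{u}_h^n\|_{L^\infty}\le \|\boldsymbol{u}_h^n\|_{L^\infty}+C_{\mathrm{inv}}h^{-1}\|\boldsymbol{u}_h^n-\Pi^u_r\boldsymbol{u}_h^n\|,
\]
and then exploits the eigen-structure only on the \emph{complement}: from $(\boldsymbol{u}_h^n,\boldsymbol{\varphi}_k)=\sqrt{M\lambda_k}\,x_k^n$ one gets $\|\boldsymbol{u}_h^n-\Pi^u_r\boldsymbol{u}_h^n\|^2=M\sum_{k>r}\lambda_k|x_k^n|^2\le M\lambda_{r+1}$. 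This sidesteps entirely the $r$-dependence obstacle you flagged---the bound involves only tail eigenvalues and \emph{improves} as $r$ grows---at the price of accepting the factor $h^{-1}\sqrt{M\lambda_{r+1}}$, which the paper leaves in $C_{stab,0}$ and argues in a closing remark is $\mathcal{O}(1)$ in practical regimes. The term $\|\boldsymbol{u}_h^n\|_{L^\infty}$ (and $\|\nabla\boldsymbol{u}_h^n\|_{L^\infty}$) is handled exactly as you propose, via the modified Stokes projection under the assumption $\Delta t\le Ch^2$; the $W^{1,\infty}$ case is analogous, with Assumption~\ref{Assu} supplying the $H^1$-bound on the complement.

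Your direct route---recognizing $\Pi^u_r\boldsymbol{u}_h^n=\sum_j P_{nj}\boldsymbol{u}_h^j$ with $P=\sum_{k\le r}\boldsymbol{x}_k\boldsymbol{x}_k^\top$ an $\ell^2$-orthogonal projection in $\mathbb{R}^M$---does resolve the $r$-dependence and avoids the explicit $h^{-1}$, but it places a factor $\|P_{n\cdot}\|_{\ell^1}\le\sqrt{M}$ on the \emph{leading} term $\max_j\|\boldsymbol{u}_h^j\|_{W^{s,\infty}}$ rather than on a small tail, so it is not obviously sharper. Note also that neither argument covers a general $\boldsymbol{v}_h\in\boldsymbol{X}_h$ as the lemma is literally stated: both the paper's proof and your proposal tacitly restrict to the snapshots $\boldsymbol{v}_h=\boldsymbol{u}_h^n$, which is all that is needed downstream.
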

    \begin{proof}
    	See Appendix \ref{Proof-Stab}.
    \end{proof}
    \indent As we have pointed out at the beginning of this section, the existence of DQs can justify the optimal piece-wise projection error\cite{Koc-TimePointwise-SINUM-2021}, which has been an assumption for the past two decades. We herein make the same assumption which can be justified with DQs, and also the appearance of DQs would not damage the theoretical and numerical results within this manuscript.
    \begin{Assumption}\label{Assu}
    	We have the following piece-wise in time projection error
    	\begin{equation}
    	\begin{aligned}		
    		\|\boldsymbol{u}^n_h - \Pi^u_r\boldsymbol{u}^n_h\|^2 &\leq C\sum_{i=r_u+1}^{d_u}\lambda_i, \quad \|\nabla(\boldsymbol{u}^n_h - \Pi^u_r\boldsymbol{u}^n_h)\|^2 \leq C\sum_{i=r_u+1}^{d_u}\lambda_i\|\nabla\varphi_i\|^2, \\
    		\|p^n_h - \Pi^p_rp^n_h\|^2 &\leq C\sum_{i=r_p+1}^{d_p}\theta_i, \quad \|\nabla(p^n_h - \Pi^p_rp^n_h)\|^2 \leq C\sum_{i=r_p+1}^{d_p}\theta_i\|\nabla\psi_i\|^2.
    	\end{aligned}
        \end{equation}
    \end{Assumption}
    
    \subsection{Data assimilation preliminaries}
    We consider $I_H$ to be an interpolation operator that satisfies: For a given mesh ${\mathcal{T}_H}$ with $H \leq 1$,
    \begin{align}
    	\left\|I_H(w)-w\right\| & \leq C_{1,H} H\|\nabla w\|, \label{Conv-H} \\ 
    	\left\|I_H(w)\right\| & \leq C_{2,H}\|w\|,  \label{Stab-H}
    \end{align}
    for any $w \in H^1(\Omega)$. For example, this holds for the $L^2$ projection onto piecewise constants, and also for the Scott-Zhang interpolant or Lagrange interpolation for smooth functions. For the (unknown) true solution $u$, $I_H(u)$ represents an approximation of what is observed of the true solution. We assume in this paper that $I_H(u)$ can be observed at any time. We note that, at the computational level, $I_H$ could be adopted Cl$\rm{\acute{e}}$ment interpolation used in \cite{CDA-ACM-2020}, Lagrange interpolation in \cite{CDA-SINUM-2020}, $L^2$ projection in \cite{CDA-VeloVort-2021-ERA}, and the modified $L^2$ projection in \cite{CDA-2021-NMPDE}, all of those operators are operated into piece-wise constant space. In the forthcoming numerical experiment, we adopt the last one mentioned above.

    \section{CDA-POD-ROM and Analysis}\label{section-3}
    \indent Our newly proposed continuous data assimilation POD-ROM incorporates a feedback control(penalty) term into G-POD-ROM(\ref{G-POD-ROM}), aiming at nudging the reduced-order solution $(\boldsymbol{u}_r^{n+1}, p_r^{n+1})$ toward the reference solution $(\boldsymbol{u}^{n+1}, p^{n+1})$ corresponding to the observation data. More specifically, a velocity penalty term is merged into the momentum equation in G-POD-ROM(\ref{G-POD-ROM}) to increase the accuracy of reduced-order velocity, and we add a similar term to the artificial compressible equation with a similar purpose but appear as a phenomenon that we can directly solve and obtain a table and accurate reduced-order pressure coupling the reduced-order velocity, rather than recovering pressure \textit{a posterior} by some pressure recovery techniques, e.g. pressure Poisson equation\cite{NS-POD-2014-JCP}. That is, the continuous data assimilation pressure-correction projection POD-ROM(CDA-Proj-POD-ROM) takes the form: Find $(\boldsymbol{u}_r^{n+1}, p_r^{n+1}) \in (\boldsymbol{X}_r, Q_r)$, such that for $\forall (\boldsymbol{v}_r, q_r) \in (\boldsymbol{X}_r, Q_r)$, we have
    \begin{equation}\label{CDA-Proj-POD-ROM}
    	\left\{
    	\begin{aligned}
    		\frac{1}{\Delta t}\left(\boldsymbol{u}_r^{n+1}-\boldsymbol{u}_r^n, \boldsymbol{v}_r\right) &+ \nu\left(\nabla \boldsymbol{u}_r^{n+1}, \nabla \boldsymbol{v}_r\right) + b\left(\boldsymbol{u}_r^n,\boldsymbol{u}_r^{n+1},\boldsymbol{v}_r\right) \\
    		&+ \left(\nabla\left(2 p_r^n-p_r^{n-1}\right), \boldsymbol{v}_r\right) + \gamma_u(I_H(\boldsymbol{u}_r^{n+1} - \boldsymbol{u}^{n+1}), I_H\boldsymbol{v}_r) =\left(\boldsymbol{f}^{n+1}, \boldsymbol{v}_r\right), \\
    		\left(\nabla \cdot \boldsymbol{u}_r^{n+1}, q_r\right) &+ \Delta t\left(\nabla\left(p_r^{n+1}-p_r^n\right), \nabla q_r\right) + \gamma_p(I_H(p_r^{n+1} - p^{n+1}), I_Hq_r) = 0,
    	\end{aligned}
    	\right.
    \end{equation}
	for $n=n_0,n_0+1,\cdots N$, $n_0 \geq 1$, and the initial conditions given by Galerkin projection as $(\boldsymbol{u}_r^{n_0}, p_r^{n_0}, p_r^{n_0-1}) = (\Pi^u_r\boldsymbol{u}_r^{n_0}, \Pi^p_rp_r^{n_0}, \Pi^p_rp_r^{n_0-1})$, and the $\gamma_u,\gamma_p$ are nudging parameters corresponding to velocity and pressure respectively(resp.).  \\
	\indent The test functions $I_H\boldsymbol{v}_r$(resp. $I_Hq_r$) in velocity nudging term(resp. in pressure's) defined above is not exactly the same as the one, $\boldsymbol{v}_r$, in the paper which proposed the CDA\cite{CDA-2014-JNS}, but follows the suggestion in \cite{CDA-2021-NMPDE} for efficient implementation and long-term stability. This modification does not damage the convergence as already adopted and analyzed in \cite{CDA-2019-CMAME,CDA-ACM-2020,CDA-SINUM-2020} etc. 

	\subsection{Stability}
	\indent In this subsection, we will derive some numerical analysis about the CDA-Proj-POD-ROM(\ref{CDA-Proj-POD-ROM}). Firstly, we can get the following unconditional stability about the reduced-order velocity and pressure.
	\begin{Theorem}
		We assume the initial solutions are stable, then for $n\geq n_0$ and $\forall \gamma_u,\gamma_p \geq 0$, we have
		$$
		\begin{aligned}
			\|\boldsymbol{u}_r^{n+1}\|^2 + \Delta t^2\|\nabla p^{n+1}_r\|^2 + \nu\Delta t\sum_{j=1}^{n}\|\nabla \boldsymbol{u}^{j+1}_r\|^2 \leq C_{init} + C_{cont},
		\end{aligned}
		$$
		where the initial data and continuous data are defined 
		$$
		\begin{aligned}
		C_{init} &:= \|\boldsymbol{u}_r^{1}\|^2 + \Delta t^2\|\nabla p^{1}_r\|^2 + 1/2\Delta t\gamma_p C^2_{2,H}\|p^1_r - p^0_r\|^2,  \\
		C_{cont} &:=  T\Big[C^2_{P}/\nu\|\boldsymbol{f}\|_{L^{\infty}(0,T;\boldsymbol{L}^2)} + C^2_{2,H}\Big(6\gamma_p\|p\|_{L^{\infty}(0,T;L^2)} + \gamma_u\|\boldsymbol{u}\|_{L^{\infty}(0,T;\boldsymbol{L}^2)}\Big)  \Big].
	    \end{aligned}
		$$
	\end{Theorem}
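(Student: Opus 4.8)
The plan is to derive the stability bound by the standard energy method applied to the coupled scheme (\ref{CDA-Proj-POD-ROM}), choosing the natural test functions and exploiting the skew-symmetry of $b$. First I would set $\boldsymbol{v}_r = 2\Delta t\,\boldsymbol{u}_r^{n+1}$ in the momentum equation. The time-difference term produces $\|\boldsymbol{u}_r^{n+1}\|^2 - \|\boldsymbol{u}_r^n\|^2 + \|\boldsymbol{u}_r^{n+1}-\boldsymbol{u}_r^n\|^2$ via the polarization identity; the viscous term gives $2\nu\Delta t\|\nabla\boldsymbol{u}_r^{n+1}\|^2$; the convection term vanishes since $b(\boldsymbol{u}_r^n,\boldsymbol{u}_r^{n+1},\boldsymbol{u}_r^{n+1})=0$. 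The forcing term is bounded by Cauchy--Schwarz, Poincar\'e (\ref{PoinIneq}) and Young: $2\Delta t(\boldsymbol{f}^{n+1},\boldsymbol{u}_r^{n+1}) \le \nu\Delta t\|\nabla\boldsymbol{u}_r^{n+1}\|^2 + (C_P^2/\nu)\Delta t\|\boldsymbol{f}^{n+1}\|^2$, leaving $\nu\Delta t\|\nabla\boldsymbol{u}_r^{n+1}\|^2$ on the good side. The velocity nudging term contributes $2\Delta t\gamma_u\|I_H\boldsymbol{u}_r^{n+1}\|^2$ (nonnegative, kept on the left or simply dropped) minus the cross term $2\Delta t\gamma_u(I_H\boldsymbol{u}^{n+1}, I_H\boldsymbol{u}_r^{n+1})$, which is split by Young's inequality so that half of $2\Delta t\gamma_u\|I_H\boldsymbol{u}_r^{n+1}\|^2$ absorbs it and the remainder is $\Delta t\gamma_u\|I_H\boldsymbol{u}^{n+1}\|^2 \le \Delta t\gamma_u C_{2,H}^2\|\boldsymbol{u}^{n+1}\|^2$ by (\ref{Stab-H}).

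Next I would handle the pressure equation by testing with $q_r = 2\Delta t^2\,p_r^{n+1}$ (so the pressure appears at the same $\Delta t^2$ scaling as in the claimed bound). The key algebraic point is that the term $(\nabla\cdot\boldsymbol{u}_r^{n+1}, q_r)$ from the continuity equation must cancel against the pressure-gradient term $(\nabla(2p_r^n - p_r^{n-1}),\boldsymbol{v}_r)$ from the momentum equation; to make this work cleanly one typically rewrites $2p_r^n - p_r^{n-1} = p_r^{n+1} - (p_r^{n+1} - 2p_r^n + p_r^{n-1})$ or, following the classical analysis of the rotational/incremental pressure-correction scheme, uses the identity relating $(\nabla\cdot\boldsymbol{u}_r^{n+1}, p_r^{n+1})$ to telescoping differences of $\Delta t\|\nabla(p_r^{n+1}-p_r^n)\|^2$. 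The stabilization term $\Delta t(\nabla(p_r^{n+1}-p_r^n),\nabla q_r)$ with $q_r = 2\Delta t^2 p_r^{n+1}$ yields, again by polarization, $\Delta t^2\big(\|\nabla p_r^{n+1}\|^2 - \|\nabla p_r^n\|^2 + \|\nabla(p_r^{n+1}-p_r^n)\|^2\big)$ scaled by $\Delta t$, producing the $\Delta t^2\|\nabla p_r^{n+1}\|^2$ term in the final estimate. The pressure nudging term gives $2\Delta t^2\gamma_p\|I_H p_r^{n+1}\|^2$ minus a cross term $2\Delta t^2\gamma_p(I_H p^{n+1}, I_H p_r^{n+1})$ handled exactly as for velocity; note the factor $6$ and the $\tfrac12\Delta t\gamma_p C_{2,H}^2\|p_r^1-p_r^0\|^2$ term in $C_{init}$ suggest that an extra shift (writing the nudging at level $n+1$ in terms of $p_r^{n+1}-p_r^n$ plus $p_r^n$) is used to absorb a mismatch, which also accounts for the coefficient $6$ on $\|p\|_{L^\infty(L^2)}$.

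After adding the two tested equations and cancelling the pressure coupling, I would be left with an inequality of the form
$$
\big(\|\boldsymbol{u}_r^{n+1}\|^2 + \Delta t^2\|\nabla p_r^{n+1}\|^2 + \text{stab terms}\big) - \big(\text{same at level }n\big) + \nu\Delta t\|\nabla\boldsymbol{u}_r^{n+1}\|^2 \le \Delta t\,G^{n+1},
$$
with $G^{n+1} = (C_P^2/\nu)\|\boldsymbol{f}^{n+1}\|^2 + C_{2,H}^2(6\gamma_p\|p^{n+1}\|^2 + \gamma_u\|\boldsymbol{u}^{n+1}\|^2)$. Summing over $j = n_0,\dots,n$ telescopes the energy, and bounding each summand on the right by its $L^\infty$-in-time value times the number of steps (which is at most $N\Delta t \le T$) produces precisely $C_{cont}$, while the boundary term at $j=n_0$ (relabeled $1$) gives $C_{init}$. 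Since every discarded term on the left is nonnegative for all $\gamma_u,\gamma_p\ge 0$, no Gronwall argument is needed and the bound is unconditional. The main obstacle I anticipate is the bookkeeping of the pressure coupling: getting the $(\nabla\cdot\boldsymbol{u}_r^{n+1},p_r^{n+1})$ and $(\nabla(2p_r^n-p_r^{n-1}),\boldsymbol{u}_r^{n+1})$ contributions to cancel requires the correct test-function scaling and a careful splitting of the second-order pressure increment $p_r^{n+1}-2p_r^n+p_r^{n-1}$, and it is there that the slightly unusual constants ($6\gamma_p$, the extra $\tfrac12\Delta t\gamma_p C_{2,H}^2\|p_r^1-p_r^0\|^2$) enter; everything else is routine energy estimation.
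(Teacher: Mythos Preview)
Your overall strategy---energy method, skew-symmetry to kill convection, Young/Poincar\'e on the forcing, telescoping---is the right one and matches the paper. However, your specific choice of test function for the continuity equation is the wrong one, and this is precisely the step you flagged as the obstacle.

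You propose $q_r = 2\Delta t^2\, p_r^{n+1}$. With $\boldsymbol{v}_r = 2\Delta t\,\boldsymbol{u}_r^{n+1}$ in the momentum equation, the pressure term there is $2\Delta t\,(\nabla(2p_r^n - p_r^{n-1}),\boldsymbol{u}_r^{n+1})$, while the divergence term from continuity becomes $2\Delta t^2\,(\nabla\cdot\boldsymbol{u}_r^{n+1}, p_r^{n+1})$. These carry different powers of $\Delta t$ and involve different pressure levels, so they cannot cancel; the leftover term $(\nabla\cdot\boldsymbol{u}_r^{n+1},\, p_r^{n+1}-2p_r^n+p_r^{n-1})$ (or any rescaled version of it) has no dissipation to absorb it. The paper instead takes $q_r = 2p_r^n - p_r^{n-1}$, exactly matching the explicit pressure in the momentum equation, so that $(\nabla(2p_r^n-p_r^{n-1}),\boldsymbol{u}_r^{n+1}) + (\nabla\cdot\boldsymbol{u}_r^{n+1}, 2p_r^n-p_r^{n-1}) = 0$ identically by integration by parts. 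The $\Delta t^2\|\nabla p_r^{n+1}\|^2$ term in the final estimate then arises not from a prescaled test function but from the stabilization term $\Delta t(\nabla(p_r^{n+1}-p_r^n),\nabla(2p_r^n-p_r^{n-1}))$ after the rewriting $2p_r^n-p_r^{n-1} = p_r^{n+1} - [(p_r^{n+1}-p_r^n)-(p_r^n-p_r^{n-1})]$---which you mentioned as an option but did not commit to.

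There is a second, less obvious, ingredient you are missing. After that rewriting, one is left with a cross term $\Delta t(\nabla(p_r^{n+1}-p_r^n),\,\nabla[(p_r^{n+1}-p_r^n)-(p_r^n-p_r^{n-1})])$ (and an analogous pressure-nudging cross term) that does not telescope on its own. The paper handles it by going back to the continuity equation and testing it with $q_r = p_r^{n+1}-p_r^n$ at both time levels $n+1$ and $n$, then subtracting; this converts the awkward cross term into $(\boldsymbol{u}_r^{n+1}-\boldsymbol{u}_r^n,\nabla(p_r^{n+1}-p_r^n))$, which is controlled by the available $\|\boldsymbol{u}_r^{n+1}-\boldsymbol{u}_r^n\|^2$ from the polarization identity (with $\epsilon_1=\Delta t$). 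The coefficient $6\gamma_p$ and the initial term $\tfrac12\Delta t\gamma_p C_{2,H}^2\|p_r^1-p_r^0\|^2$ come out of exactly this two-level subtraction applied to the nudging term, not from a simple shift as you suggested. Without this step the argument does not close.
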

	\begin{proof}
		Setting $(\boldsymbol{v}_r,q_r) = (\boldsymbol{u}^{n+1}_r, 2p^n_r - p^{n-1}_r)$ and using skew-symmetry property, scheme (\ref{CDA-Proj-POD-ROM}) becomes
		$$
		\begin{aligned}
			\frac{1}{\Delta t}(\boldsymbol{u}^{n+1}_r &- \boldsymbol{u}^{n}_r, \boldsymbol{u}^{n+1}_r) + \nu\|\nabla \boldsymbol{u}^{n+1}_r\|^2 + \Delta t\left(\nabla(p^{n+1}_r - p^{n}_r), \nabla(2p^{n}_r - p^{n-1}_r)\right) \\
			&+ \gamma_u(I_H(\boldsymbol{u}_r^{n+1} - \boldsymbol{u}^{n+1}), I_H\boldsymbol{u}^{n+1}_r) + \gamma_p\left(I_H(p^{n+1}_r - p^{n+1}), I_H(2p^{n}_r - p^{n-1}_r)\right) = (\boldsymbol{f}^{n+1}, \boldsymbol{u}^{n+1}_r).
		\end{aligned}
		$$
		This is equivalent to
		$$
		\begin{aligned}
			\frac{1}{\Delta t}\left(\boldsymbol{u}^{n+1}_r - \boldsymbol{u}^{n}_r, \boldsymbol{u}^{n+1}_r\right) &+ \nu\|\nabla \boldsymbol{u}^{n+1}_r\|^2 + \Delta t\left(\nabla(p^{n+1}_r - p^{n}_r), \nabla p^{n+1}_r\right) + \gamma_p\left(I_H(p^{n+1}_r - p^{n+1}), I_H(p^{n+1}_r)\right) \\
			&+ \gamma_u(I_H(\boldsymbol{u}_r^{n+1} - \boldsymbol{u}^{n+1}), I_H\boldsymbol{u}^{n+1}_r) \\
			=& (\boldsymbol{f}^{n+1}, \boldsymbol{u}^{n+1}_r) + \Delta t\left(\nabla(p^{n+1}_r - p^{n}_r), \nabla(p^{n+1}_r - p^{n}_r) - \nabla(p^{n}_r - p^{n-1}_r)\right) \\
			&+ \gamma_p\left(I_H(p^{n+1}_r - p^{n+1}), I_H(p^{n+1}_r - p^{n}_r) - I_H(p^{n}_r - p^{n-1}_r)\right) \\
			:=& S_1 + S_2 + S_3.
		\end{aligned}
		$$
		For $S_1$,
		$$
		|S_1| \leq \frac{C^2_{P}}{2\nu}\|\boldsymbol{f}^{n+1}\|^2 + \frac12\nu\|\nabla \boldsymbol{u}^{n+1}_r\|^2,
		$$
		and for $S_2$, using the second equation in the scheme by letting $q_r=p^{n+1}_r-p^n_r$ at $t=t_{n+1}$ and $t=t_n$ separately to get
		$$
		\begin{aligned}
			\left(\nabla \cdot (\boldsymbol{u}_r^{n+1} - \boldsymbol{u}_r^{n}), p^{n+1}_r-p^n_r\right) &+  \Delta t\left(\nabla\left(p_r^{n+1}-p_r^n\right) - \nabla\left(p_r^{n}-p_r^{n-1}\right), \nabla (p^{n+1}_r-p^n_r)\right) \\
			&+ \gamma_p(I_H(p_r^{n+1} - p^{n+1}) - I_H(p_r^{n} - p^{n}), I_H(p^{n+1}_r-p^n_r)) = 0,
	    \end{aligned}
		$$
		which means,
		$$
		\begin{aligned}
			S_2 &= -\left(\nabla \cdot (\boldsymbol{u}_r^{n+1} - \boldsymbol{u}_r^{n}), p^{n+1}_r-p^n_r\right) - \gamma_p(I_H(p_r^{n+1} - p^{n+1}) - I_H(p_r^{n} - p^{n}), I_H(p^{n+1}_r-p^n_r)) \\
			:&= S_{21} + S_{22}.
    	\end{aligned}
        $$	
		Thus, 
		$$
		\begin{aligned}
			|S_{21}| &= \left|\left(\boldsymbol{u}_r^{n+1} - \boldsymbol{u}_r^{n}, \nabla(p^{n+1}_r-p^n_r)\right)\right| \leq \frac{1}{2\epsilon_{1}}\|\boldsymbol{u}_r^{n+1} - \boldsymbol{u}_r^{n}\|^2 + \frac{\epsilon_{1}}{2}\|\nabla p^{n+1}_r - \nabla p^n_r\|^2,
		\end{aligned}
		$$	
		and
		$$
		\begin{aligned}
			S_{22} &= - \gamma_p(I_H(p_r^{n+1} - p_r^{n}) - I_H(p^{n+1} - p^{n}), I_H(p^{n+1}_r-p^n_r)) \\
			 &= -\frac12\gamma_p\bigg(\|I_H(p^{n+1}_r-p^n_r)\|^2 - \|I_H(p^{n+1} - p^{n})\|^2 + \|I_H(p_r^{n+1} - p_r^{n}) - I_H(p^{n+1} - p^{n})\|^2\bigg).
		\end{aligned}
		$$
		Moreover, 
		$$
		\begin{aligned}
			S_3 &= \gamma_p\left(I_H(p^{n+1}_r - p^{n+1}), I_H(p^{n+1}_r - p^{n}_r) - I_H(p^{n}_r - p^{n-1}_r)\right) \\
			&\leq \frac{\gamma_p}{2\epsilon_{2}}\|I_H(p^{n+1}_r - p^{n+1})\|^2 + \frac{\epsilon_{2}}{2}\gamma_p\|I_H(p^{n+1}_r - p^{n}_r) - I_H(p^{n}_r - p^{n-1}_r)\|^2 \\
			&\leq \frac{\gamma_p}{2\epsilon_{2}}\|I_H(p^{n+1}_r - p^{n+1})\|^2 + \frac{\epsilon_{2}}{2}\gamma_p\left(\|I_H(p^{n+1}_r - p^{n}_r)\|^2 + \|I_H(p^{n}_r - p^{n-1}_r)\|^2\right)
		\end{aligned}
		$$
		Collecting all results above and rearranging to get
		$$
		\begin{aligned}
			\frac{1}{2\Delta t}(&\|\boldsymbol{u}_r^{n+1}\|^2 - \|\boldsymbol{u}_r^{n}\|^2) + \left(\frac{1}{2\Delta t} - \frac{1}{2\epsilon_{1}} \right)\|\boldsymbol{u}_r^{n+1} - \boldsymbol{u}_r^{n}\|^2 + \frac12\nu\|\nabla \boldsymbol{u}^{n+1}_r\|^2 + \frac12\Delta t\left(\|\nabla p^{n+1}_r\|^2 - \|\nabla p^{n}_r\|^2\right) \\
			&+ \left(\frac12\Delta t - \frac{\epsilon_{1}}{2}\right)\|\nabla p^{n+1}_r - \nabla p^{n}_r\|^2 + \left(1 - \epsilon_{2}\right)\frac12\gamma_p\|I_H(p^{n+1}_r-p^n_r)\|^2 \\
			&+ \frac12\gamma_p\|I_H(p_r^{n+1} - p_r^{n}) - I_H(p^{n+1} - p^{n})\|^2 + \left(\frac12 - \frac{1}{2\epsilon_{2}}\right)\gamma_p\|I_Hp^{n+1}_r - I_H p^{n+1}\|^2 \\
			&+ \frac12\gamma_p\left(\|I_Hp^{n+1}_r\|^2 - \| I_H p^{n+1}\|^2\right) + \frac12\gamma_u\|I_H\boldsymbol{u}_r^{n+1}\|^2 + \frac12\gamma_u\|I_H\boldsymbol{u}^{n+1}_r - I_H\boldsymbol{u}^{n+1}\|^2 \\
		\leq& \frac{C^2_{P}}{2\nu}\|\boldsymbol{f}^{n+1}\|^2 + \frac{\epsilon_{2}}{2}\gamma_p\|I_H(p^{n}_r - p^{n-1}_r)\|^2 + \frac12\gamma_p\|I_H(p^{n+1} - p^{n})\|^2 + \frac12\gamma_u\|I_H\boldsymbol{u}^{n+1}\|^2.
		\end{aligned}
		$$
		Those terms on the LHS need to remain positive, to this end, we let $\epsilon_{1} = \Delta t$, and $\epsilon_{2} = 1/2$, then we bound the negative term on LHS as
		$$
		\begin{aligned}
		\left(\frac12 - \frac{1}{2\epsilon_{2}}\right)\gamma_p\|I_Hp^{n+1}_r - I_H p^{n+1}\|^2	&\geq -\frac12\gamma_p\|I_Hp^{n+1}_r\| -\frac12\gamma_p\|I_H p^{n+1}\|^2,
    	\end{aligned}
		$$
		and using the estimate 
		$$
		(a+b)^2 \leq 2(a^2 + b^2) \quad \Rightarrow \quad(a-b)^2 \geq \frac12 a^2 - b^2,
		$$
		to deduce
		$$
		\frac12\gamma_p\|I_H(p_r^{n+1} - p_r^{n}) - I_H(p^{n+1} - p^{n})\|^2 \geq \frac14\gamma_p\|I_H(p_r^{n+1} - p_r^{n})\|^2 - \frac12\gamma_p\|I_H(p^{n+1} - p^{n})\|^2.
		$$
		Then, the above inequality becomes
		$$
		\begin{aligned}
			\frac{1}{2\Delta t}\Big(&\|\boldsymbol{u}_r^{n+1}\|^2 - \|\boldsymbol{u}_r^{n}\|^2\Big) + \frac12\nu\|\nabla \boldsymbol{u}^{n+1}_r\|^2 + \frac12\Delta t\Big(\|\nabla p^{n+1}_r\|^2 - \|\nabla p^{n}_r\|^2\Big) \\
			&+ \frac14\gamma_p\Big(\|I_H(p_r^{n+1} - p_r^{n})\|^2 - \|I_H(p^{n}_r - p^{n-1}_r)\|^2\Big) \\
			&+ \frac12\gamma_u\|I_H\boldsymbol{u}_r^{n+1}\|^2 + \frac12\gamma_u\|I_H\boldsymbol{u}^{n+1}_r - I_H\boldsymbol{u}^{n+1}\|^2 \\
			\leq& \frac{C^2_{P}}{2\nu}\|\boldsymbol{f}^{n+1}\|^2 + \gamma_p\|I_H(p^{n+1} - p^{n})\|^2 + \gamma_p\| I_H p^{n+1}\|^2 
			+ \frac12\gamma_u\|I_H\boldsymbol{u}^{n+1}\|^2.
		\end{aligned}
		$$
		Multiplying $2\Delta t$, adding $n$ from $1$ to $n$, we get
		$$
		\begin{aligned}
			\Big(\|\boldsymbol{u}_r^{n+1}\|^2 &+ \Delta t^2\|\nabla p^{n+1}_r\|^2 + \frac12\Delta t\gamma_p\|I_H(p^{n+1}_r-p^{n}_r)\|^2\Big) \\
			& + \Delta t\sum_{j=1}^{n}\Big(\nu\|\nabla \boldsymbol{u}^{j+1}_r\|^2 + \gamma_u\|I_H\boldsymbol{u}_r^{j+1}\|^2 + \gamma_u\|I_H\boldsymbol{u}^{j+1}_r - I_H\boldsymbol{u}^{j+1}\|^2 \Big) \\
			\leq& \left(\|\boldsymbol{u}_r^{1}\|^2 + \Delta t^2\|\nabla p^{1}_r\|^2 + \frac12\Delta t\gamma_p\|I_H(p^{1}_r-p^0_r)\|^2\right) \\
			&+ \Delta t\sum_{j=1}^{n}\left(\frac{C^2_{P}}{\nu}\|\boldsymbol{f}^{j+1}\|^2 + 2\gamma_p\| I_H p^{j+1}\|^2 + 2\gamma_p\|I_H(p^{j+1} - p^{j})\|^2 + \gamma_u\|I_H\boldsymbol{u}^{j+1}\|^2\right)
		\end{aligned}
		$$
		Dropping unneeded terms, using stability estimate about $I_H$ and the regularity assumptions, we complete the proof.
	\end{proof}
	\subsection{Convergence}
	\indent We next prove that the reduced-order solutions in (\ref{CDA-Proj-POD-ROM}) converge to the high-fidelity solutions in (\ref{FEM-FOM}), up to disretization and ROM projection error, and then with which we could conclude to the convergence of reduced-order solutions, with the help of the convergence results Lemma \ref{Conv-Exact-FE} of high-fidelity solutions.  \\
	\indent We firstly take symbols as  
	$$
	\begin{aligned}
		\boldsymbol{e}^n_u &:= \boldsymbol{u}^n_h - \boldsymbol{u}^n_r = \Big(\boldsymbol{u}^n_h - \Pi^u_r\boldsymbol{u}^n_h\Big) + \Big(\Pi^u_r\boldsymbol{u}^n_h - \boldsymbol{u}^n_r\Big) \triangleq \boldsymbol{\eta}^n_u + \boldsymbol{\xi}^n_u, \;\text{and}	\\
		e^n_p &:= p^n_h - p^n_r = \Big(p^n_h - \Pi^p_rp^n_h\Big) + \Big(\Pi^p_rp^n_h - p^n_r\Big) \triangleq \eta^n_p + \xi^n_p,
	\end{aligned}
	$$
	then, we can obtain
	\begin{Lemma}\label{Conv-FE-POD}
		For any $n \geq n_0$, $n_0 \geq 1$, we assume $\Delta t \leq \mathcal{O}(h^2)$ and for some constant $C$, we have the following convergence
		\begin{equation}
			\begin{aligned}	
				\|\boldsymbol{e}^{n+1}_u\|^2 & + \nu\Delta t\|\nabla \boldsymbol{e}^{n+1}_u\|^2 + \Delta t^2\|\nabla e^{n+1}_p\|^2 + \frac14\gamma_p\Delta t\sum_{j=n_0}^{n}\|e^{j+1}_p\|^2 \\
				\leq & \|\boldsymbol{e}^{n_0}_u\|^2 + \Delta t^2\|\nabla e^{n_0}_p\|^2 + \frac12(1+C_3)\Delta t^2\|\nabla e^{n_0}_p - \nabla e^{n_0-1}_p\|^2  + \nu\Delta t\|\nabla e^{n_0}_u\|^2 \\
				&+ CT(C_4+1)\sum_{i=r_u+1}^{d_u}\lambda_i\left(1 + \|\nabla\varphi_i\|^2\right) + CT(C_5+\gamma_p)\sum_{i=r_p+1}^{d_p}\theta_i\left(1 + \|\nabla\psi_i\|^2\right) \\
				&+CTC_6\left(h^{2k} + \Delta t^2\right)
			\end{aligned}
		\end{equation}
	where constants $C_i$, $i=3,\cdots,6$, are defined in (\ref{6ConsDefi}).
	\end{Lemma}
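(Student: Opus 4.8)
The plan is to compare the scheme (\ref{CDA-Proj-POD-ROM}) directly with the full-order scheme (\ref{FEM-FOM}). First I would restrict (\ref{FEM-FOM}) to test functions $(\boldsymbol{v}_r,q_r)\in(\boldsymbol{X}_r,Q_r)\subset(\boldsymbol{X}_h,Q_h)$, add and subtract the nudging terms $\gamma_u(I_H(\boldsymbol{u}_h^{n+1}-\boldsymbol{u}^{n+1}),I_H\boldsymbol{v}_r)$ and $\gamma_p(I_H(p_h^{n+1}-p^{n+1}),I_Hq_r)$ on both sides, and subtract (\ref{CDA-Proj-POD-ROM}). This produces a coupled error system for $(\boldsymbol{e}_u^{n+1},e_p^{n+1})$ in which the nudging contributions collapse to $\gamma_u(I_H\boldsymbol{e}_u^{n+1},I_H\boldsymbol{v}_r)$ and $\gamma_p(I_He_p^{n+1},I_Hq_r)$, and the only genuinely new data on the right-hand side are the consistency terms $\gamma_u(I_H(\boldsymbol{u}_h^{n+1}-\boldsymbol{u}^{n+1}),I_H\boldsymbol{v}_r)$ and $\gamma_p(I_H(p_h^{n+1}-p^{n+1}),I_Hq_r)$; using (\ref{Stab-H}) and the full-order bounds of Lemma~\ref{Conv-Exact-FE} these are $\mathcal{O}(h^{2k}+\Delta t^2)$, accounting for the $CTC_6(h^{2k}+\Delta t^2)$ term. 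I would then split $\boldsymbol{e}_u=\boldsymbol{\eta}_u+\boldsymbol{\xi}_u$, $e_p=\eta_p+\xi_p$ and push every term carrying $\boldsymbol{\eta}_u$ or $\eta_p$ to the right-hand side: the $L^2$-orthogonality of $\Pi^u_r,\Pi^p_r$ annihilates $\frac{1}{\Delta t}(\boldsymbol{\eta}_u^{n+1}-\boldsymbol{\eta}_u^n,\boldsymbol{v}_r)$, while the surviving projection-error terms ($\nu(\nabla\boldsymbol{\eta}_u^{n+1},\nabla\boldsymbol{v}_r)$, $(\nabla\cdot\boldsymbol{\eta}_u^{n+1},q_r)$, $\Delta t(\nabla(\eta_p^{n+1}-\eta_p^n),\nabla q_r)$, the convective pieces carrying $\boldsymbol{\eta}_u$, and the nudging pieces carrying $\eta_p$) are bounded through Assumption~\ref{Assu}, producing the $\sum_{i>r_u}\lambda_i(1+\|\nabla\varphi_i\|^2)$ and $\sum_{i>r_p}\theta_i(1+\|\nabla\psi_i\|^2)$ contributions.

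With the system written for $(\boldsymbol{\xi}_u,\xi_p)$, I would test with the same pair as in the stability proof, $(\boldsymbol{v}_r,q_r)=(\boldsymbol{\xi}_u^{n+1},\,2\xi_p^n-\xi_p^{n-1})$, add the momentum and continuity error equations so that the pressure-velocity coupling of the $\xi$-parts cancels exactly as before, and replay the algebra: the temporal term gives $\frac{1}{2\Delta t}(\|\boldsymbol{\xi}_u^{n+1}\|^2-\|\boldsymbol{\xi}_u^n\|^2+\|\boldsymbol{\xi}_u^{n+1}-\boldsymbol{\xi}_u^n\|^2)$, the viscous term $\nu\|\nabla\boldsymbol{\xi}_u^{n+1}\|^2$, and the velocity nudging term a positive $\frac12\gamma_u\|I_H\boldsymbol{\xi}_u^{n+1}\|^2$ plus a sign-definite remainder; the leftover $\Delta t(\nabla(\xi_p^{n+1}-\xi_p^n),\nabla(2\xi_p^n-\xi_p^{n-1}))$ is rewritten as in the $S_2$ step and paired with the continuity error equation tested against $\xi_p^{n+1}-\xi_p^n$ at levels $n{+}1$ and $n$, generating the telescoping quantities $\frac12\Delta t(\|\nabla\xi_p^{n+1}\|^2-\|\nabla\xi_p^n\|^2)$ and $\frac14\gamma_p(\|I_H(\xi_p^{n+1}-\xi_p^n)\|^2-\|I_H(\xi_p^n-\xi_p^{n-1})\|^2)$, together with a term $\lesssim\frac{1}{\Delta t}\|\boldsymbol{\xi}_u^{n+1}-\boldsymbol{\xi}_u^n\|^2$ absorbed by the choice $\epsilon_1=\Delta t$, $\epsilon_2=\frac12$ exactly as there. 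The pressure nudging term leaves a multiple of $\gamma_p\|I_H\xi_p^{n+1}\|^2$ on the left, which I would upgrade to a genuine $\frac14\gamma_p\|e_p^{n+1}\|^2$ using $\|q_r\|\le\|I_Hq_r\|+C_{1,H}H\|\nabla q_r\|$ from (\ref{Conv-H}), an inverse estimate on $Q_r$ (valid for $H$ small relative to the reduced spaces), and the splitting $e_p=\eta_p+\xi_p$; this is precisely the step that substitutes for the missing $LBB_r$ condition, and the $L^2$-in-pressure control it provides is in turn what lets the non-compatible term $(\nabla\cdot\boldsymbol{\eta}_u^{n+1},q_r)\le\|\nabla\boldsymbol{\eta}_u^{n+1}\|\,\|2\xi_p^n-\xi_p^{n-1}\|$ be absorbed (the $\|\nabla\boldsymbol{\eta}_u^{n+1}\|^2$ part feeding the $\sum\lambda_i\|\nabla\varphi_i\|^2$ term).

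The nonlinear term is handled by $b(\boldsymbol{u}_h^n,\boldsymbol{u}_h^{n+1},\boldsymbol{\xi}_u^{n+1})-b(\boldsymbol{u}_r^n,\boldsymbol{u}_r^{n+1},\boldsymbol{\xi}_u^{n+1})=b(\boldsymbol{e}_u^n,\boldsymbol{u}_h^{n+1},\boldsymbol{\xi}_u^{n+1})+b(\boldsymbol{u}_r^n,\boldsymbol{e}_u^{n+1},\boldsymbol{\xi}_u^{n+1})$: the $\boldsymbol{\xi}_u^{n+1}$-on-$\boldsymbol{\xi}_u^{n+1}$ piece of the second vanishes by skew-symmetry, the $\boldsymbol{\eta}_u$-pieces feed the $\sum\lambda_i(1+\|\nabla\varphi_i\|^2)$ term, and the dangerous $\boldsymbol{\xi}_u$-pieces are bounded by $\|\boldsymbol{\xi}_u^n\|\,\|\nabla\boldsymbol{u}_h^{n+1}\|_{\infty}\,\|\boldsymbol{\xi}_u^{n+1}\|$ and $\|\boldsymbol{\xi}_u^n\|\,\|\nabla\boldsymbol{\xi}_u^{n+1}\|\,\|\boldsymbol{u}_h^{n+1}\|_{\infty}$, the second split by Young against $\nu\|\nabla\boldsymbol{\xi}_u^{n+1}\|^2$; the uniform bounds $\|\boldsymbol{u}_h^{n+1}\|_{\infty},\|\nabla\boldsymbol{u}_h^{n+1}\|_{\infty}\le C$ follow from writing $\boldsymbol{u}_h^{n+1}=\boldsymbol{s}_h^{n+1}+(\boldsymbol{u}_h^{n+1}-\boldsymbol{s}_h^{n+1})$, the $W^{s,\infty}$-stability of the modified Stokes projection, the inverse inequality (\ref{InveIneq}) and Lemma~\ref{Conv-Exact-FE}, which is exactly where the hypotheses $\Delta t\le\mathcal{O}(h^2)$ and $k\ge2$ are consumed, while $\|\boldsymbol{u}_r^n\|$ (and, via a reduced inverse estimate, $\|\boldsymbol{u}_r^n\|_{L^4}$) stay bounded by the stability theorem. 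Finally I multiply the one-step inequality by $2\Delta t$, sum over $j=n_0,\dots,n$ so the discrete terms telescope (leaving the $C_{init}$-type data at level $n_0$ and the $\frac12(1+C_3)\Delta t^2\|\nabla e^{n_0}_p-\nabla e^{n_0-1}_p\|^2$ remainder from the un-telescoped $S_2$ difference), apply the discrete Gronwall Lemma~\ref{Gronwall} (legitimate since the factor multiplying $\|\boldsymbol{\xi}_u^{j+1}\|^2$ is independent of $\Delta t$), and pass back from $(\boldsymbol{\xi}_u,\xi_p)$ to $(\boldsymbol{e}_u,e_p)$ with the triangle inequality and Assumption~\ref{Assu}.

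The hard part will be the second step: carrying the tightly balanced sign-definiteness of the stability proof through the perturbed error system, so that after the new $\boldsymbol{\eta}_u,\eta_p$ and consistency contributions are absorbed every left-hand quadratic remains nonnegative — in particular the $\|\nabla(\xi_p^{j+1}-\xi_p^j)\|^2$ and $\|I_H(\xi_p^{j+1}-\xi_p^j)\|^2$ differences produced by the projection coupling and the $\gamma_p\|e_p^{j+1}\|^2$ term — and, intertwined with this, making the CDA pressure nudging deliver just enough $L^2$ pressure control to close the $(\nabla\cdot\boldsymbol{\eta}_u^{n+1},q_r)$ estimate without any reduced inf--sup condition. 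Keeping the Gronwall constant $\Delta t$-independent and securing the uniform $W^{s,\infty}$ bounds on the discrete velocity are secondary but essential.
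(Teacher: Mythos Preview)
Your roadmap is essentially the paper's: subtract (\ref{CDA-Proj-POD-ROM}) from (\ref{FEM-FOM}) on $(\boldsymbol X_r,Q_r)$, split $e=\eta+\xi$, test with $(\boldsymbol\xi_u^{n+1},2\xi_p^n-\xi_p^{n-1})$, reuse the continuity error equation at levels $n{+}1$ and $n$ to convert the $\Delta t(\nabla(\xi_p^{n+1}-\xi_p^n),\nabla(2\xi_p^n-\xi_p^{n-1}))$ term into telescoping pieces (the paper's $B_1$--$B_3$), extract $\gamma_p\|\xi_p^{n+1}\|^2$ from the pressure nudging term, sum and apply Gronwall. Two tactical choices differ.

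For the nonlinear term the paper does \emph{not} invoke the stability theorem on $\boldsymbol u_r^n$; instead it writes $\boldsymbol u_r^{n+1}=\Pi_r^u\boldsymbol u_h^{n+1}-\boldsymbol\xi_u^{n+1}$ and uses the $W^{s,\infty}$ bounds on $\Pi_r^u\boldsymbol u_h^{n+1}$ from Lemma~\ref{Lemma-Stab}, so the dangerous $\xi$--pieces are $b(\boldsymbol\xi_u^n,\Pi_r^u\boldsymbol u_h^{n+1},\boldsymbol\xi_u^{n+1})$ rather than $b(\boldsymbol\xi_u^n,\boldsymbol u_h^{n+1},\boldsymbol\xi_u^{n+1})$. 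This keeps the argument self-contained (no reduced $L^4$ inverse estimate, no appeal to a priori stability of $\boldsymbol u_r$) and produces the explicit constants $C_{stab,0},C_{stab,1}$ visible in $C_1,C_4$.

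For the $L^2$ pressure control the paper does not use your route $\|\xi_p\|\le\|I_H\xi_p\|+C_{1,H}H\|\nabla\xi_p\|$ plus a reduced inverse estimate (which would impose an extra smallness condition on $H$ not present in the statement). It writes $\|I_H\xi_p^{n+1}\|^2=\|\xi_p^{n+1}\|^2+2(\xi_p^{n+1},I_H\xi_p^{n+1}-\xi_p^{n+1})+\|I_H\xi_p^{n+1}-\xi_p^{n+1}\|^2$ and bounds the cross term by Young together with Poincar\'e on $\xi_p^{n+1}\in L^2_0\cap H^1$, pushing the resulting $\|\nabla\xi_p\|^2$ pieces into the Gronwall accumulator. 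Relatedly, the non-compatible term you call $(\nabla\cdot\boldsymbol\eta_u^{n+1},q_r)$ is handled in the paper by integration by parts, i.e.\ $(\boldsymbol\eta_u^{n+1},2\nabla\xi_p^n-\nabla\xi_p^{n-1})$, and absorbed by the $\Delta t^2\|\nabla\xi_p\|^2$ and $\|\nabla\xi_p^{j+1}-\nabla\xi_p^j\|^2$ accumulators rather than by the CDA $L^2$ control; your variant also closes but with a $\gamma_p^{-1}$ in front of the $\sum\lambda_i\|\nabla\varphi_i\|^2$ contribution.
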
	
    
    \begin{Remark}
    	We remark that the condition $\Delta t \leq \mathcal{O}(h^2)$ assumed in the proof of Lemma \ref{Lemma-Stab} which will be utilized soon, is not very difficult to meet in the convergence analysis of reduced-order solutions, since our ROM is constructed on the FE solutions, whose accuracy needs to be guaranteed by assuming the temporal scale $\Delta t$ and spatial scale $h$ to satisfy some certain relations, e.g., $\Delta t=\mathcal{O}(h^{k+1})$, $k$ is the polynomial degree of velocity FE space, and meanwhile $k \geq 1$ in those classical inf-sup stable FE pairs; in other words, the condition $\Delta t \leq \mathcal{O}(h^2)$ is actually being implied in FOM in the computation.  
    \end{Remark}
     
	\begin{proof}
	\noindent Subtracting POD-ROM (\ref{CDA-Proj-POD-ROM}) from FEM-FOM (\ref{FEM-FOM}) after taking $(\boldsymbol{v}_h,q_h) = (\boldsymbol{v}_r,q_r)$ to get
	\begin{equation}\label{ErroEqua-1}
	\begin{aligned}
		\frac{1}{\Delta t}(\boldsymbol{e}^{n+1}_u - \boldsymbol{e}^n_u,\boldsymbol{v}_r) &+ \nu(\nabla \boldsymbol{e}^{n+1}_u, \nabla \boldsymbol{v}_r) + [b(\boldsymbol{u}^n_h,\boldsymbol{u}^{n+1}_h,\boldsymbol{v}_r) - b(\boldsymbol{u}^n_r,\boldsymbol{u}^{n+1}_r,\boldsymbol{v}_r)] \\
		&- [(2p^n_h-p^{n-1}_h, \nabla\cdot \boldsymbol{v}_r) - (2p^n_r-p^{n-1}_r, \nabla\cdot \boldsymbol{v}_r)] + \gamma_u(I_H(\boldsymbol{u}^{n+1} - \boldsymbol{u}^{n+1}_r), I_H\boldsymbol{v}_r)= 0, \\
		(\nabla\cdot \boldsymbol{e}^{n+1}_u, q_r) &+ \Delta t(\nabla e^{n+1}_p - \nabla e^n_p, \nabla q_r) + \gamma_p(I_H(p^{n+1} - p^{n+1}_r), I_Hq_r) = 0.
	\end{aligned}
    \end{equation}		
	Letting $(\boldsymbol{v}_r, q_r) = (\boldsymbol{\xi}^{n+1}_u, 2\xi^n_p-\xi^{n-1}_p)$, adding and rearranging to get error equation
	\begin{equation}\label{ErroEqua-2}
	\begin{aligned}
		\frac{1}{\Delta t}(\boldsymbol{\xi}^{n+1}_u &- \boldsymbol{\xi}^n_u, \boldsymbol{\xi}^{n+1}_u) + \nu\|\nabla \boldsymbol{\xi}^{n+1}_u\|^2 + \Delta t(\nabla \xi^{n+1}_p - \nabla \xi^n_p, 2\nabla\xi^n_p-\nabla\xi^{n-1}_p) \\
		&+ \gamma(I_H(p^{n+1} - p^{n+1}_r), I_H(2\xi^n_p-\xi^{n-1}_p)) + \gamma_u(I_H(\boldsymbol{u}^{n+1} - \boldsymbol{u}^{n+1}_r), I_H\boldsymbol{\xi}^{n+1}_u)\\
		=& -\frac{1}{\Delta t}\left(\boldsymbol{\eta}^{n+1}_u - \boldsymbol{\eta}^n_u, \boldsymbol{\xi}^{n+1}_u\right) - \nu(\nabla\boldsymbol{\eta}^{n+1}_u, \nabla\boldsymbol{\xi}^{n+1}_u) - [b(\boldsymbol{u}^n_h,\boldsymbol{u}^{n+1}_h, \boldsymbol{\xi}^{n+1}_u) - b(\boldsymbol{u}^n_r,\boldsymbol{u}^{n+1}_r,\boldsymbol{\xi}^{n+1}_u)] \\
		&+ (2\eta^n_p-\eta^{n-1}_p, \nabla\cdot \boldsymbol{\xi}^{n+1}_u) - \Delta t(\nabla \eta^{n+1}_p - \nabla \eta^n_p, 2\nabla\xi^n_p-\nabla\xi^{n-1}_p) - (\nabla\cdot \boldsymbol{\eta}^{n+1}_u, 2\xi^n_p-\xi^{n-1}_p) \\
		:=& A_1 + A_2 + A_3 + A_4 + A_5 + A_6.
	\end{aligned}
    \end{equation}			
	$A_1$ vanishes since the $L^2$ projection orthogonality, and in the following we will use the inequalities $(a \pm b)^2 \leq 2(a^2 + b^2)$, $\forall a,b \in \mathbb{R}$ frequently. Firstly,
	$$
	\begin{aligned}
		|A_2| \leq \nu\|\nabla\boldsymbol{\eta}^{n+1}_u\|\cdot\|\nabla\boldsymbol{\xi}^{n+1}_u\| \leq \frac{1}{2}\nu\|\nabla\boldsymbol{\eta}^{n+1}_u\|^2 + \frac{1}{2}\nu\|\nabla\boldsymbol{\xi}^{n+1}_u\|^2.
	\end{aligned}
    $$
    Similarly,
    $$
    \begin{aligned}	
		|A_4| \leq 5\|2\nabla\eta^n_p-\nabla\eta^{n-1}_p\|^2 + \frac{1}{20}\|\boldsymbol{\xi}^{n+1}_u\|^2,
	\end{aligned}
    $$
    and
    $$
    \begin{aligned}	
		|A_6| &= |(\boldsymbol{\eta}^{n+1}_u, 2\nabla\xi^n_p-\nabla\xi^{n-1}_p)| \\
	    &= |(\boldsymbol{\eta}^{n+1}_u, (\nabla\xi^{n+1}_p-\nabla\xi^n_p)-(\nabla\xi^n_p-\nabla\xi^{n-1}_p)) - (\eta^{n+1}_u, \nabla\xi^{n+1}_p)| \\
	    &\leq |(\boldsymbol{\eta}^{n+1}_u, (\nabla\xi^{n+1}_p-\nabla\xi^n_p)-(\nabla\xi^n_p-\nabla\xi^{n-1}_p))| + |(\eta^{n+1}_u, \nabla\xi^{n+1}_p)| \\	
		&\leq 4\|\boldsymbol{\eta}^{n+1}_u\|^2 + \frac18(\|\nabla\xi^{n+1}_p-\nabla\xi^n_p\|^2 + \|\nabla\xi^n_p-\nabla\xi^{n-1}_p\|^2) + \frac{1}{2}\|\boldsymbol{\eta}^{n+1}_u\|^2 + \frac{1}{2}\|\nabla\xi^{n+1}_p\|^2.
    \end{aligned}
    $$		
	For $A_3$, we separate convection difference as
	$$
	\begin{aligned}
		|A_3| &= |b(\boldsymbol{u}^n_h,\boldsymbol{u}^{n+1}_h - \boldsymbol{u}^{n+1}_r, \boldsymbol{\xi}^{n+1}_u) - b(\boldsymbol{u}^n_h-\boldsymbol{u}^n_r,\boldsymbol{u}^{n+1}_r,\boldsymbol{\xi}^{n+1}_u)| \\
		&= |b(\boldsymbol{u}^n_h,\boldsymbol{\eta}^{n+1}_u, \boldsymbol{\xi}^{n+1}_u) + b(\boldsymbol{u}^n_h,\boldsymbol{\xi}^{n+1}_u, \boldsymbol{\xi}^{n+1}_u) + b(\boldsymbol{u}^n_h-\boldsymbol{u}^n_r,\boldsymbol{\xi}^{n+1}_u + \boldsymbol{u}^{n+1}_r, \boldsymbol{\xi}^{n+1}_u)| \\
		&\leq |b(\boldsymbol{u}^n_h,\boldsymbol{\eta}^{n+1}_u, \boldsymbol{\xi}^{n+1}_u)| + |b(\boldsymbol{\xi}^n_u,\Pi^u_r\boldsymbol{u}^{n+1}_h, \boldsymbol{\xi}^{n+1}_u)| + |b(\boldsymbol{\eta}^n_u,\Pi^u_r\boldsymbol{u}^{n+1}_h, \boldsymbol{\xi}^{n+1}_u)| \\
		&:= A_{31} + A_{32} + A_{33}.
    \end{aligned}
    $$			
	Therefore, using Lemma \ref{Stab-L2-Piu} to obtain
	$$
	\begin{aligned}
		A_{31} &\leq \|\boldsymbol{u}^n_h\|_{\infty}\|\nabla\boldsymbol{\eta}^{n+1}_u\|\|\boldsymbol{\xi}^{n+1}_u\| + \frac12\|\nabla\cdot \boldsymbol{u}^n_h\|_{L^{\infty}}\|\boldsymbol{\eta}^{n+1}_u\|\|\boldsymbol{\xi}^{n+1}_u\| \\
		&\leq 5\|\boldsymbol{u}^n_h\|^2_{\infty}\|\nabla\boldsymbol{\eta}^{n+1}_u\|^2 + \frac{1}{20}\|\boldsymbol{\xi}^{n+1}_u\|^2 + \frac54\|\nabla\cdot \boldsymbol{u}^n_h\|^2_{L^{\infty}}\|\boldsymbol{\eta}^{n+1}_u\|^2 + \frac{1}{20}\|\boldsymbol{\xi}^{n+1}_u\|^2 \\
		&\leq 5C^2_{L^{\infty}}\|\nabla\boldsymbol{\eta}^{n+1}_u\|^2 + \frac{1}{20}\|\boldsymbol{\xi}^{n+1}_u\|^2	+ \frac{5C^2_{W^{1,\infty}}}{4}\|\boldsymbol{\eta}^{n+1}_u\|^2 + \frac{1}{20}\|\boldsymbol{\xi}^{n+1}_u\|^2,
    \end{aligned}
    $$		
	where constants $C_{L^{\infty}}$, $C_{W^{1,\infty}}$are defined in (\ref{Cons-unh-Linfty}) and (\ref{Cons-unh-W1infty}). And by similar techniques and Lemma \ref{Lemma-Stab},
	$$
	\begin{aligned}
		A_{32} &\leq \|\nabla\Pi^u_r\boldsymbol{u}^{n+1}_h\|_{\infty}\|\boldsymbol{\xi}^{n}_u\|\|\boldsymbol{\xi}^{n+1}_u\| + \frac12\|\nabla\cdot \boldsymbol{\xi}^n_u\|\|\Pi^u_r\boldsymbol{u}^{n+1}_h\|_{\infty}\|\boldsymbol{\xi}^{n+1}_u\| \\
		&\leq \frac{C_{stab,1} C_{W^{1,\infty}}}{2}\|\boldsymbol{\xi}^n_u\|^2 + \frac{C_{stab,1} C_{W^{1,\infty}}}{2}\|\boldsymbol{\xi}^{n+1}_u\|^2 + \frac{\nu}{2}\|\nabla\boldsymbol{\xi}^n_u\|^2 + \frac{C^2_{stab,0}\cdot C^2_{L^{\infty}}}{8\nu}\|\boldsymbol{\xi}^{n+1}_u\|^2,
	\end{aligned}
	$$	
	where constants $C_{stab,0}$, $C_{stab,1}$ are defined in (\ref{Cons-Stab-0}) and (\ref{Cons-Stab-1}). Similarly, 
	$$
	\begin{aligned}
		A_{33} \leq 5C^2_{stab,1} C^2_{W^{1,\infty}}\|\boldsymbol{\eta}^n_u\|^2 + \frac{1}{20}\|\boldsymbol{\xi}^{n+1}_u\|^2 + \frac{5C^2_{stab,0} C^2_{L^{\infty}}}{4}\|\nabla\boldsymbol{\eta}^{n}_u\|^2 + \frac{1}{20}\|\nabla\boldsymbol{\xi}^{n+1}_u\|^2.
	\end{aligned}
	$$		
	Thus,
	$$
	\begin{aligned}		
		|A_3| &\leq A_{31} + A_{32} + A_{33} \\
		&\leq \frac12\left(\frac25 + C_{stab,1} C_{W^{1,\infty}} + \frac{C^2_{stab,0} C^2_{L^{\infty}}}{4\nu}\right)\|\boldsymbol{\xi}^{n+1}_u\|^2 + \frac{C_{stab,1} C_{W^{1,\infty}}}{2}\|\boldsymbol{\xi}^n_u\|^2 + \frac{\nu}{2}\|\nabla\boldsymbol{\xi}^n_u\|^2 \\
		&\quad+ \frac{5C^2_{W^{1,\infty}}}{4}\|\boldsymbol{\eta}^{n+1}_u\|^2 + 5C^2_{stab,1} C^2_{W^{1,\infty}}\|\boldsymbol{\eta}^n_u\|^2 + 5C^2_{L^{\infty}}\|\nabla\boldsymbol{\eta}^{n+1}_u\|^2 + \frac{5C^2_{stab,0} C^2_{L^{\infty}}}{4}\|\nabla\boldsymbol{\eta}^{n}_u\|^2.
    \end{aligned}
    $$		
	Moreover,
	$$
	\begin{aligned}		
		|A_5| &= |- \Delta t(\nabla \eta^{n+1}_p - \nabla \eta^n_p, \nabla\xi^{n+1}_p) + \Delta t(\nabla \eta^{n+1}_p - \nabla \eta^n_p, (\nabla\xi^{n+1}_p - \nabla\xi^{n}_p) - (\nabla\xi^{n}_p - \nabla\xi^{n-1}_p))| \\
		&\leq |\Delta t(\nabla \eta^{n+1}_p - \nabla \eta^n_p, \nabla\xi^{n+1}_p)| + |\Delta t(\nabla \eta^{n+1}_p - \nabla \eta^n_p, (\nabla\xi^{n+1}_p - \nabla\xi^{n}_p) - (\nabla\xi^{n}_p - \nabla\xi^{n-1}_p))|	\\
		&\leq \frac{\Delta t}{2}\|\nabla \eta^{n+1}_p - \nabla \eta^n_p\|^2 + \frac{\Delta t}{2}\|\nabla\xi^{n+1}_p\|^2 + 2\Delta t\|\nabla \eta^{n+1}_p - \nabla \eta^n_p\|^2 \\
		&\quad+ \frac14\Delta t\left(\|\nabla\xi^{n+1}_p - \nabla\xi^{n}_p\|^2 + \|\nabla\xi^{n}_p - \nabla\xi^{n-1}_p\|^2\right).
    \end{aligned}
    $$		
	For the last third term on LHS in error equation (\ref{ErroEqua-2}),
	$$
	\begin{aligned}
		\Delta t(\nabla\xi^{n+1}_p - \nabla\xi^{n}_p, 2\nabla\xi^{n}_p - \nabla\xi^{n-1}_p) &= \Delta t(\nabla\xi^{n+1}_p - \nabla\xi^{n}_p, \nabla\xi^{n+1}_p) \\
		&\quad- \Delta t(\nabla\xi^{n+1}_p - \nabla\xi^{n}_p, (\nabla\xi^{n+1}_p - \nabla\xi^{n}_p) - (\nabla\xi^{n}_p - \nabla\xi^{n-1}_p)).
    \end{aligned}
	$$	
	For the last term above, we use the second equation in (\ref{ErroEqua-1})
	$$
	(\nabla\cdot \boldsymbol{e}^{n+1}_u, q_r) + \Delta t(\nabla\xi^{n+1}_p - \nabla\xi^{n}_p, \nabla q_r) + \gamma_p(I_H(p^{n+1}-p^{n+1}_r), I_Hq_r) = -\Delta t(\nabla\eta^{n+1}_p - \nabla\eta^{n}_p, \nabla q_r),
	$$	
	to set $q_r = \xi^{n+1}_p - \xi^{n}_p$	 twice at $t=t_{n+1}$ and $t=t_{n}$ to have
	$$
	\begin{aligned}
		(\nabla\cdot \boldsymbol{e}^{n+1}_u, \xi^{n+1}_p - \xi^{n}_p) &+ \Delta t(\nabla\xi^{n+1}_p - \nabla\xi^{n}_p, \nabla \xi^{n+1}_p - \nabla\xi^{n}_p) + \gamma_p(I_H(p^{n+1}-p^{n+1}_r), I_H(\xi^{n+1}_p - \xi^{n}_p)) \\
		&= -\Delta t(\nabla\eta^{n+1}_p - \nabla\eta^{n}_p, \nabla \xi^{n+1}_p - \nabla\xi^{n}_p), \\
		(\nabla\cdot \boldsymbol{e}^{n}_u, \xi^{n+1}_p - \xi^{n}_p) &+ \Delta t(\nabla\xi^{n}_p - \nabla\xi^{n-1}_p, \nabla \xi^{n+1}_p - \nabla\xi^{n}_p) + \gamma_p(I_H(p^{n}-p^{n}_r), I_H(\xi^{n+1}_p - \xi^{n}_p)) \\
		&= -\Delta t(\nabla\eta^{n}_p - \nabla\eta^{n-1}_p, \nabla \xi^{n+1}_p - \nabla\xi^{n}_p),
    \end{aligned}
    $$		
	that is,	
	$$
	\begin{aligned}	
		&\Delta t(\nabla\xi^{n+1}_p - \nabla\xi^{n}_p, (\nabla\xi^{n+1}_p - \nabla\xi^{n}_p) - (\nabla\xi^{n}_p - \nabla\xi^{n-1}_p)) \\
		&= -(\nabla\cdot (\boldsymbol{e}^{n+1}_u - \boldsymbol{e}^{n}_u), \xi^{n+1}_p - \xi^{n}_p) \\
		&\quad- \gamma_p(I_H(p^{n+1}-p^{n+1}_r) - I_H(p^{n}-p^{n}_r), I_H(\xi^{n+1}_p - \xi^{n}_p)) \\
		&\quad- \Delta t((\nabla\eta^{n+1}_p - \nabla\eta^{n}_p) - (\nabla\eta^{n}_p - \nabla\eta^{n-1}_p), \nabla \xi^{n+1}_p - \nabla\xi^{n}_p) \\
		&:= B_1 + B_2 + B_3
    \end{aligned}
    $$	
	Then, 	
	$$
	\begin{aligned}		
		B_1 = (\boldsymbol{e}^{n+1}_u - \boldsymbol{e}^{n}_u, \nabla \xi^{n+1}_p - \nabla \xi^{n}_p) 
		\leq 2\left(2\|\boldsymbol{\eta}^{n+1}_u\|^2 + 2\|\boldsymbol{\eta}^{n}_u\|^2 + \|\boldsymbol{\xi}^{n+1}_u - \boldsymbol{\xi}^{n}_u\|^2\right) + \frac{1}{4}\|\nabla \xi^{n+1}_p - \nabla \xi^{n}_p\|,
	\end{aligned}
    $$
    and
    $$
    \begin{aligned}	
		B_2 &= -\gamma_p(I_H(p^{n+1}-\Pi^p_rp^{n+1}_h) - I_H(p^{n}-\Pi^p_rp^{n}_h), I_H(\xi^{n+1}_p - \xi^{n}_p)) - \gamma_p\|I_H(\xi^{n+1}_p - \xi^{n}_p)\|^2 \\
		&\leq \gamma_p\|I_H(p^{n+1}-\Pi^p_rp^{n+1}_h) - I_H(p^{n}-\Pi^p_rp^{n}_h)\|\cdot\|I_H(\xi^{n+1}_p - \xi^{n}_p)\| - \gamma_p\|I_H(\xi^{n+1}_p - \xi^{n}_p)\|^2 \\
		&\leq \frac{\gamma_p}{2}\|I_H(p^{n+1}-\Pi^p_rp^{n+1}_h) - I_H(p^{n}-\Pi^p_rp^{n}_h)\|^2 + \frac{\gamma_p}{2}\|I_H(\xi^{n+1}_p - \xi^{n}_p)\|^2 - \gamma_p\|I_H(\xi^{n+1}_p - \xi^{n}_p)\|^2 \\
		&\leq 2\gamma_p\left(\|I_H(p^{n+1} - p^{n+1}_h)\|^2 + \|I_H\eta^{n+1}_p\|^2 + \|I_H(p^{n} - p^{n}_h)\|^2 + \|I_H\eta^n_p\|^2\right) \\
		&\quad+ \frac{\gamma_p}{2}\|I_H(\xi^{n+1}_p - \xi^{n}_p)\|^2 - \gamma_p\|I_H(\xi^{n+1}_p - \xi^{n}_p)\|^2,
	\end{aligned}
    $$
    and
    $$
    \begin{aligned}	
		B_3 &\leq \Delta t\|(\nabla\eta^{n+1}_p - \nabla\eta^{n}_p) - (\nabla\eta^{n}_p - \nabla\eta^{n-1}_p)\|\cdot\|\nabla \xi^{n+1}_p - \nabla\xi^{n}_p\| \\
		&\leq \frac{\Delta t}{\epsilon_{14}}\left(\|\nabla\eta^{n+1}_p - \nabla\eta^{n}_p\|^2 + \|\nabla\eta^{n}_p - \nabla\eta^{n-1}_p\|^2\right) + \frac{\epsilon_{14}\Delta t}{2}\|\nabla \xi^{n+1}_p - \nabla\xi^{n}_p\|^2 \\
		&\leq 8\Delta t\left(\|\nabla\eta^{n+1}_p\|^2 + 2\|\nabla\eta^{n}_p\|^2 + \|\nabla\eta^{n-1}_p\|^2\right) + \frac{\Delta t}{8}\|\nabla \xi^{n+1}_p - \nabla\xi^{n}_p\|^2.
    \end{aligned}
    $$	
	Moreover, for the last second term on LHS in (\ref{ErroEqua-2}),
	$$
	\begin{aligned}
		\gamma_p(I_H(p^{n+1} - p^{n+1}_r), I_H(2\xi^n_p-\xi^{n-1}_p))	&= \gamma_p(I_H(p^{n+1} - p^{n+1}_r), I_H\xi^{n+1}_p) \\
		&\quad- \gamma_p(I_H(p^{n+1} - p^{n+1}_r), I_H(\xi^{n+1}_p-\xi^{n}_p) - I_H(\xi^n_p-\xi^{n-1}_p)) \\
		&:= B_4 - B_5,
	\end{aligned}
    $$	
	and for $B_4$, we estimate it similarly as done in \cite{CDA-2019-CMAME} to obtain
	$$
	\begin{aligned}
		B_4 = \gamma_p\|\xi^{n+1}_p\|^2 + \gamma_p(I_H(p^{n+1} - \Pi^p_rp^{n+1}_h), I_H\xi^{n+1}_p) + \gamma_p\|I_H\xi^{n+1}_p - \xi^{n+1}_p\|^2 + 2\gamma_p(\xi^{n+1}_p, I_H\xi^{n+1}_p - \xi^{n+1}_p),
	\end{aligned}
	$$	
	where we can get
	$$
	\begin{aligned}
		-\gamma_p(I_H(p^{n+1} - \Pi^p_rp^{n+1}_h), I_H\xi^{n+1}_p) &\leq \gamma_p\|I_H(p^{n+1} - \Pi^p_rp^{n+1}_h)\|\cdot\|I_H\xi^{n+1}_p\| \\
		&\leq 8C^2_{2,H}\gamma_p\left(\|I_H(p^{n+1} - p^{n+1}_h)\|^2 + \|I_H\eta^{n+1}_p\|^2\right) + \frac{\gamma_p}{8}\|\xi^{n+1}_p\|^2
	\end{aligned}
    $$
    and
    $$
    \begin{aligned}
    	-2\gamma_p(\xi^{n+1}_p, I_H\xi^{n+1}_p - \xi^{n+1}_p) &\leq \gamma_p\|\xi^{n+1}_p\|^2 + \gamma_p\|I_H\xi^{n+1}_p - \xi^{n+1}_p\|^2 \\
    	&\leq 2\gamma_p C^2_{P}\|\nabla\xi^{n+1}_p - \nabla\xi^{n}_p\|^2 + 2\gamma_p C^2_{P}\|\nabla\xi^{n}_p\|^2 + \gamma_p\|I_H\xi^{n+1}_p - \xi^{n+1}_p\|^2.
    \end{aligned}
    $$
	For $B_5$,
	$$
	\begin{aligned}
		B_5 &\leq \gamma_p\|I_H(p^{n+1} - p^{n+1}_r)\|\cdot\|I_H(\xi^{n+1}_p-\xi^{n}_p) - I_H(\xi^n_p-\xi^{n-1}_p)\| \\
		&\leq \frac{\gamma_p}{2C^2_{2,H}}\|I_H(p^{n+1} - p^{n+1}_r)\|^2 + \frac{C^2_{2,H}\gamma_p}{2}\|I_H(\xi^{n+1}_p-\xi^{n}_p) - I_H(\xi^n_p-\xi^{n-1}_p)\|^2 \\
		&\leq \frac{\gamma_p}{2C^2_{2,H}}\left[3\|I_H(p^{n+1} - \Pi^p_rp^{n+1}_h)\|^2 + \frac32\|I_H\xi^{n+1}_p\|^2\right] \\
		&\quad+ \frac{C^2_{2,H}\gamma_p}{2}\left(\|I_H(\xi^{n+1}_p-\xi^{n}_p)\|^2 + \|I_H(\xi^n_p-\xi^{n-1}_p)\|^2\right)\\
		&\leq \frac{3\gamma_p}{2C^2_{2,H}}\left[\|I_H(p^{n+1} - p^{n+1}_h)\|^2 + \|I_H\eta^{n+1}_p\|^2 + \frac12C^2_{2,H}\|\xi^{n+1}_p\|^2\right] \\
		&\quad+ \frac{\gamma_p C^4_{2,H}C^2_{P}}{2}\left(\|\nabla \xi^{n+1}_p-\nabla \xi^{n}_p\|^2 + \|\nabla \xi^n_p-\nabla \xi^{n-1}_p\|^2\right).
	\end{aligned}
	$$	
	Finally, for the last term on LHS in (\ref{ErroEqua-2}), just like done for bounding $B_4$,
	$$
	\begin{aligned}
		\gamma_u(I_H(\boldsymbol{u}^{n+1} - \boldsymbol{u}^{n+1}_r), I_H\boldsymbol{\xi}^{n+1}_u) &= \gamma_u\|\boldsymbol{\xi}^{n+1}_u\|^2 + 2\gamma_u(\boldsymbol{\xi}^{n+1}_u, I_H\boldsymbol{\xi}^{n+1}_u-\boldsymbol{\xi}^{n+1}_u) \\
		&\quad+ \gamma_u(I_H\boldsymbol{\eta}^{n+1}_u, I_H\boldsymbol{\xi}^{n+1}_u) + \gamma_u\|I_H\boldsymbol{\xi}^{n+1}_u-\boldsymbol{\xi}^{n+1}_u\|^2,
    \end{aligned}
    $$
    then
    $$
    \begin{aligned}
    	- \gamma_u(I_H\boldsymbol{\eta}^{n+1}_u, I_H\boldsymbol{\xi}^{n+1}_u) & \leq \frac{\gamma_u}{2}C^2_{2,H}\|\boldsymbol{\eta}^{n+1}_u\|^2 + \frac{\gamma_u}{2}C^2_{2,H}\|\boldsymbol{\xi}^{n+1}_u\|^2, \\
    	2\gamma_u(\boldsymbol{\xi}^{n+1}_u, I_H\boldsymbol{\xi}^{n+1}_u-\boldsymbol{\xi}^{n+1}_u) &\leq \frac{\gamma_u}{2}\|\boldsymbol{\xi}^{n+1}_u\|^2 + C^2_{1,H}H^2\gamma_u\|\nabla\boldsymbol{\xi}^{n+1}_u\|^2 + \gamma_u\|I_H\boldsymbol{\xi}^{n+1}_u-\boldsymbol{\xi}^{n+1}_u\|^2.
    \end{aligned}
    $$
	Collecting all results above, using the projection error estimates in Assumption \ref{Assu}	and rearranging to get
    $$
    \begin{aligned}
    	\frac{1}{2\Delta t}\bigg(\|\boldsymbol{\xi}^{n+1}_u\|^2 &- \|\boldsymbol{\xi}^n_u\|^2\bigg) + \frac{1}{2\Delta t}\|\boldsymbol{\xi}^{n+1}_u - \boldsymbol{\xi}^{n}_u\|^2 + \frac12\nu\bigg(\|\nabla\boldsymbol{\xi}^{n+1}_u\|^2 - \|\nabla\boldsymbol{\xi}^n_u\|^2\bigg) + \frac12\Delta t\bigg(\|\nabla \xi^{n+1}_p\|^2 - \|\nabla \xi^{n}_p\|^2\bigg) \\
    	& + \frac14\Delta t\bigg(\|\nabla \xi^{n+1}_p - \nabla\xi^{n}_p\|^2 - \|\nabla \xi^{n}_p - \nabla\xi^{n-1}_p\|^2\bigg) + \frac18\gamma_p\|\xi^{n+1}_p\|^2 + \frac12\gamma_p\|I_H(\xi^{n+1}_p - \xi^{n}_p)\|^2 \\
    	\leq& \frac12\left[\frac92 + C_{stab,1}\cdot C_{W^{1,\infty}} + \frac{C^2_{stab,0}\cdot C^2_{L^{\infty}}}{4\nu} + \frac12\gamma_u(C^2_{2,H} + 1)\right]\|\boldsymbol{\xi}^{n+1}_u\|^2 \\
    	&+ \frac12\bigg(C_{stab,1}C_{W^{1,\infty}} + 8\bigg)\|\boldsymbol{\xi}^n_u\|^2 + \frac12\bigg(\Delta t + 1\bigg)\|\nabla \xi^{n+1}_p\|^2 + 2\gamma_p C^2_{P}\|\nabla \xi^{n}_p\|^2\\
    	& + \left(\frac38 + \frac12\Delta t + \frac12\gamma_p C^4_{2,H}C^2_{P} + 2\gamma_p C^2_{P}\right)\|\nabla \xi^{n+1}_p - \nabla\xi^{n}_p\|^2\\
    	& + \left(\frac18 + \frac12\gamma_p C^4_{2,H}C^2_{P}\right)\|\nabla\xi^{n}_p - \nabla\xi^{n-1}_p\|^2 \\
    	&+ C\sum_{i=r_u+1}^{d_u}\lambda_i\left[\left(\frac{25}{2} + \frac{5}{4}C^2_{W^{1,\infty}} + 5C^2_{stab,1}\cdot C^2_{W^{1,\infty}} + \frac12\gamma_u\right)\right. \\
    	&\quad\left.+ \left(\frac{\nu}{2} + 5C^2_{L^{\infty}} + \frac54C^2_{stab,0}\cdot C^2_{L^{\infty}}\right)\cdot\|\nabla\varphi_i\|^2\right] \\
    	&+ C\sum_{i=r_p+1}^{d_p}\theta_i\left[\left(4\gamma_p C^2_{2,H} + 8C^4_{2,H}\gamma_p + \frac32\gamma_p \right) + \bigg(40 + 42\Delta t\bigg)\|\nabla\psi_i\|^2\right] \\
    	&+CC^2_{2,H}\left(4\gamma_p + 8C^2_{2,H}\gamma_p + \frac{3\gamma_p}{2C^2_{2,H}}\right)\bigg(h^{2k} + \Delta t^2\bigg)
    \end{aligned}
    $$
	Define
	\begin{equation}\label{6ConsDefi}
	\begin{aligned}	
		C_1 :=& \left[\frac{25}2 + 2C_{stab,1}\cdot C_{W^{1,\infty}} + \frac{C^2_{stab,0}\cdot C^2_{L^{\infty}}}{4\nu} + \frac12\gamma_u\bigg(C^2_{2,H} + 1\bigg)\right], \\
		C_2 :=& \Delta t^{-2}\bigg(\Delta t + 1 + 4\gamma_p C^2_{P}\bigg), \\
		C_3 :=& 2\Delta t^{-2}\left(1 + \Delta t + 2\gamma_p C^4_{2,H}C^2_{P} + 4\gamma_p C^2_{P}\right), \\
		C_4 :=& \max\left\{\left(\frac{25}{2} + \frac{5}{4}C^2_{W^{1,\infty}} + 5C^2_{stab,1}\cdot C^2_{W^{1,\infty}} + \frac12\gamma_u\right),\;\left(\frac{\nu}{2} + 5C^2_{L^{\infty}} + \frac54C^2_{stab,0}\cdot C^2_{L^{\infty}}\right)\right\}, \\
		C_5 :=& \max\left\{\left(4\gamma_p C^2_{2,H} + 8C^4_{2,H}\gamma_p + \frac32\gamma_p \right),\;\bigg(40 + 42\Delta t\bigg)\right\}, \\
		C_6 :=& C^2_{2,H}\left(4\gamma_p + 8C^2_{2,H}\gamma_p + \frac{3\gamma_p}{2C^2_{2,H}}\right).
	\end{aligned}		
    \end{equation}
	Multiplying	$2\Delta t$ and adding from $n_0$ to $n$, we get
	$$
	\begin{aligned}	
		\bigg(\|\boldsymbol{\xi}^{n+1}_u\|^2 &+ \Delta t^2\|\nabla \xi^{n+1}_p\|^2 + \frac12\Delta t^2\|\nabla\xi^{n+1}_p - \nabla\xi^{n}_p\|^2\bigg) + \nu\Delta t\|\nabla\boldsymbol{\xi}^{n+1}_u\|^2 \\
		&+ \gamma_p\Delta t\sum_{j=n_0}^{n}\bigg(\frac14\|\xi^{j+1}_p\|^2 + \|I_H(\xi^{j+1}_p - \xi^{j}_p)\|^2\bigg) \\
		\leq & \bigg(\|\boldsymbol{\xi}^{n_0}_u\|^2 + \Delta t^2\|\nabla \xi^{n_0}_p\|^2 + \frac12\Delta t^2\|\nabla\xi^{n_0}_p - \nabla\xi^{n_0-1}_p\|^2\bigg) + \frac12C_3\Delta t^2\|\nabla\xi^{n_0}_p - \nabla\xi^{n_0-1}_p\|^2  + \nu\Delta t\|\nabla\boldsymbol{\xi}^{n_0}_u\|^2 \\
		&+ \Delta t\sum_{j=n_0}^{n}\bigg(C_1\|\xi^{j+1}_u\|^2 + C_2\Delta t^2\|\nabla\xi^{j+1}_p\|^2 + \frac12C_3\Delta t^2\|\nabla\xi^{j+1}_p - \nabla\xi^{j}_p\|^2\bigg) \\
		&+ CTC_4\sum_{i=r_u+1}^{d_u}\lambda_i\left(1 + \|\nabla\varphi_i\|^2\right) + CTC_5\sum_{i=r_p+1}^{d_p}\theta_i\left(1 + \|\nabla\psi_i\|^2\right) +CTC_6\left(h^{2k} + \Delta t^2\right)
	\end{aligned}$$		
	Setting $\tilde{C}:=\max\{C_1,C_2,C_3\}$, and choosing $\Delta t$ such that $\tilde{C}\Delta t \leq 1/2$, then using Lemma \ref{Gronwall}, Assumption \ref{Assu} and dropping unneeded terms, and finally we reach the final result based on the fact that our initial condition in ROM are obtained by projecting high-fidelity solution, i.e., the initial approximation errors $\xi^{n_0}_u=\xi^{n_0}_p=\xi^{n_0-1}_p=0$.		
	\end{proof}		
	\indent Finally, by triangle inequality, Lemma \ref{Conv-Exact-FE} and Lemma \ref{Conv-FE-POD}, we obtain the following theorem which states the convergence between the continuous solutions and reduced-order solutions.  		
	\begin{Theorem}\label{Conv-Exact-POD}
		(Error estimate for CDA-Proj-POD-ROM) For any $n \geq n_0$($n_0 \geq 1$), let $\left(\boldsymbol{u}^n, p^n\right)$ is the solution of (\ref{ContVariNSEs}) at $t=t_n$,  $\left(\boldsymbol{u}_r^n, p_r^n\right)$ denotes the POD-based reduced-order solutions obtained in (\ref{CDA-Proj-POD-ROM}), then we assume $\Delta t = \mathcal{O}(h^2)$ and there exists a constant $C$, such that the following convergence estimate holds:
	\begin{equation}
	\begin{aligned}
		\left\|\boldsymbol{u}^n-\boldsymbol{u}_r^n\right\|^2 & + \nu \Delta t \left\|\nabla\left(\boldsymbol{u}^n-\boldsymbol{u}_r^n\right)\right\|^2 + \Delta t^2 \left\|\nabla\left(p^n-p_r^n\right)\right\|^2 + \gamma_p\|p-p_r\|^2_{\ell(L^2)} \\
		\leq&  C\left[\sum_{i=r+1}^{d_{\tilde{\boldsymbol{u}}}} \lambda_i\bigg(1 + \|\nabla\varphi_i\|^2\bigg) + \sum_{i=r+1}^{d_p} \theta_i\bigg(1 + \|\nabla\psi_i\|^2\bigg) + \Delta t^2 + h^{2k}\right]
	\end{aligned}
    \end{equation}		
    \end{Theorem}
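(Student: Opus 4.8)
\noindent The plan is to obtain the estimate by combining the two convergence results already established --- Lemma \ref{Conv-Exact-FE} (continuous solution versus FE solution) and Lemma \ref{Conv-FE-POD} (FE solution versus reduced-order solution) --- through the triangle inequality. Writing $\boldsymbol{u}(t_n) - \boldsymbol{u}_r^n = (\boldsymbol{u}(t_n) - \boldsymbol{u}_h^n) + \boldsymbol{e}^n_u$ and $p(t_n) - p_r^n = (p(t_n) - p_h^n) + e^n_p$, I would apply $(a+b)^2 \le 2a^2 + 2b^2$ to each of the four quantities on the left-hand side of the theorem, $\|\boldsymbol{u}^n - \boldsymbol{u}_r^n\|^2$, $\|\nabla(\boldsymbol{u}^n - \boldsymbol{u}_r^n)\|^2$, $\|\nabla(p^n - p_r^n)\|^2$ and $\|p - p_r\|^2_{\ell^2(L^2)}$, using in addition $\Delta t\sum_{m}\|p(t_m)-p^m_h\|^2 \le T\max_m\|p(t_m)-p^m_h\|^2$ for the last one. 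The ``continuous minus FE'' halves are then controlled by Lemma \ref{Conv-Exact-FE}: since $h\le 1$ and $\Delta t\le 1$, the bounds $\Delta t + h^{k+1}$ and $\Delta t + h^{k}$, after squaring and after multiplying by the extra factors $\nu\Delta t$ and $\Delta t^2$ attached to the gradient and pressure-gradient terms, are all dominated by $C(\boldsymbol{u},p)(\Delta t^2 + h^{2k})$; with $\Delta t = \mathcal{O}(h^2)$ and $k\ge 1$ the leftover remainders $h^{2k+2}$, $\Delta t^3$, $\Delta t^2 h^{2k}$ are of even higher order and harmless.

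For the ``FE minus ROM'' halves I would invoke Lemma \ref{Conv-FE-POD} directly: its left-hand side is precisely $\|\boldsymbol{e}^{n+1}_u\|^2 + \nu\Delta t\|\nabla\boldsymbol{e}^{n+1}_u\|^2 + \Delta t^2\|\nabla e^{n+1}_p\|^2 + \tfrac14\gamma_p\Delta t\sum_{j=n_0}^n\|e^{j+1}_p\|^2$, so the $\gamma_p$-weighted $\ell^2(L^2)$ pressure contribution is supplied by that lemma (divide its last left-hand term by $\gamma_p$, recombine with the already-estimated FE part) with no further argument. The one thing left to do is to dispose of the four initial-data terms on the right-hand side of Lemma \ref{Conv-FE-POD}, namely $\|\boldsymbol{e}^{n_0}_u\|^2$, $\Delta t^2\|\nabla e^{n_0}_p\|^2$, $\tfrac12(1+C_3)\Delta t^2\|\nabla e^{n_0}_p - \nabla e^{n_0-1}_p\|^2$ and $\nu\Delta t\|\nabla\boldsymbol{e}^{n_0}_u\|^2$: since the ROM is initialized from the $L^2$-projection of the FE data, the optimizable components vanish, $\boldsymbol{\xi}^{n_0}_u = \xi^{n_0}_p = \xi^{n_0-1}_p = 0$, hence $\boldsymbol{e}^{n_0}_u = \boldsymbol{\eta}^{n_0}_u$, $e^{n_0}_p = \eta^{n_0}_p$, $e^{n_0-1}_p = \eta^{n_0-1}_p$ are pure projection errors, and Assumption \ref{Assu} bounds them by $C\sum_{i=r_u+1}^{d_u}\lambda_i(1+\|\nabla\varphi_i\|^2)$ and $C\sum_{i=r_p+1}^{d_p}\theta_i(1+\|\nabla\psi_i\|^2)$ (the surplus $\Delta t$-prefactors only help). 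I would also note that although $C_3$ (like $C_2$) carries a factor $\Delta t^{-2}$, it always occurs multiplied by $\Delta t^2$, so the effective coefficient is $\mathcal{O}(1)$ and $\Delta t$-independent.

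Finally I would collect terms: $C_3,\dots,C_6$ from Lemma \ref{Conv-FE-POD} together with the FE-error constants of Lemma \ref{Conv-Exact-FE} are fixed, $\Delta t$-independent numbers depending only on $\Omega$, $\nu$, $T$, the interpolation constants $C_{1,H},C_{2,H}$, the stability constants $C_{stab,0},C_{stab,1},C_{L^\infty},C_{W^{1,\infty}}$, and $\gamma_u,\gamma_p$; they can be absorbed into a single $C = C(\boldsymbol{u},p,\nu,T,\gamma_u,\gamma_p)$, which yields exactly the stated bound with the two POD tails $\sum_{i=r+1}^{d_u}\lambda_i(1+\|\nabla\varphi_i\|^2)$, $\sum_{i=r+1}^{d_p}\theta_i(1+\|\nabla\psi_i\|^2)$ and the discretization remainder $\Delta t^2 + h^{2k}$. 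There is no genuine analytic difficulty here --- the substantive estimates were done in Lemma \ref{Conv-FE-POD}; the main obstacle, such as it is, is purely bookkeeping: checking that \emph{every} term produced by Lemma \ref{Conv-Exact-FE} (in particular the gradient and pressure-gradient terms, which inherit extra powers of $\Delta t$ and $h$) and \emph{every} initial-data term from Lemma \ref{Conv-FE-POD} is genuinely $\mathcal{O}(\Delta t^2 + h^{2k})$ or of POD-tail type under the scaling $\Delta t = \mathcal{O}(h^2)$, so that nothing of lower order is quietly dropped.
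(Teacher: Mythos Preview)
Your proposal is correct and follows exactly the approach the paper takes: the paper's entire proof is the single sentence ``by triangle inequality, Lemma \ref{Conv-Exact-FE} and Lemma \ref{Conv-FE-POD}, we obtain the following theorem,'' and you have simply spelled out the bookkeeping that this sentence leaves implicit. Your handling of the initial-data terms via $\boldsymbol{\xi}^{n_0}_u = \xi^{n_0}_p = \xi^{n_0-1}_p = 0$ and Assumption \ref{Assu}, and your observation that the $\Delta t^{-2}$ in $C_3$ is cancelled by the $\Delta t^2$ prefactor, are exactly the checks needed to make the one-line argument rigorous.
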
			
    \newpage
    \section{Numerical Test}\label{section-4}
	In this section, we will present some numerical results for the CDA-Proj-POD-ROM(\ref{CDA-Proj-POD-ROM}) proposed and analyzed before, for the following two purposes: 1) verifying convergence about the number of POD modes; 2) the necessity of adding DA terms to obtain stable and accurate reduced-order solutions. The numerical experiments are performed on the famous benchmark problem of the 2D unsteady flow past a cylinder with circular cross-section at $Re=100$ \cite{Benchmark-1996}. This problem was also used in the ROM settings with stable reduced-order pressure in NSE in \cite{POD-LPS-SINUM-2021,POD-LPS-SINUM-2020,Post-proc-POD-ROM-PresRecove-2022-CMAME} etc. The open-source FE package iFEM \cite{iFEM} has been used to run the numerical experiments.  \\
	\indent Following \cite{Benchmark-1996}, the computational domain is a rectangular channel with a circular hole(see Fig. \ref{CompDomain_TwoMeshes_ObseLoca} on the top):
	$$
	\Omega=\{(0,2.2) \times(0,0.41)\} \backslash\left\{\boldsymbol{x}:(\boldsymbol{x}-(0.2,0.2))^2 \leq 0.05^2\right\}.
	$$
	\indent The parabolic inflow and outflow profile 
	$$
	\boldsymbol{u}(0, y)=\boldsymbol{u}(2.2, y)=0.41^{-2}(1.2 y(0.41-y), 0), \quad 0 \leq y \leq 0.41,
	$$
	is prescribed, no-slip conditions are imposed at the other boundaries. Using the mean values of the inflow velocity $\bar{U}=1\,\text{m/s}$ and the diameter of the cylinder $D=0.1\,\text{m}$, the $Re$ considered is $Re=100$. The density is given by $\rho = 1\,\text{kg/m}^3$ and no external forcing, i.e. $\boldsymbol{f}=0\,\text{m/s}^2$. In the fully developed periodic regime, a vortex shedding is formed behind the obstacle, which is the well-known von Kármán vortex street(see Fig. \ref{Magnitude_of_FE_and_POD_velocity_and_pressure}). \\
	\indent The benchmark parameters are the drag and lift coefficients at the cylinder, and in order to reduce the negative effect of line integral on the boundary of the cylinder, we adopt the volume integral defined in \cite{benchmark-John-2001-IJNMF} as
	$$
	\begin{aligned}
		c_D & =-\frac{2}{D \bar{U}^2}\left[\nu\left(\nabla \boldsymbol{u}, \nabla \boldsymbol{v}_D\right)+b\left(\boldsymbol{u}, \boldsymbol{u}, \boldsymbol{v}_D\right)-\left(p, \nabla \cdot \boldsymbol{v}_D\right)\right], \\
		c_L & =-\frac{2}{D \bar{U}^2}\left[\nu\left(\nabla \boldsymbol{u}, \nabla \boldsymbol{v}_L\right)+b\left(\boldsymbol{u}, \boldsymbol{u}, \boldsymbol{v}_L\right)-\left(p, \nabla \cdot \boldsymbol{v}_L\right)\right],
	\end{aligned}
	$$		
	for arbitrary functions $\boldsymbol{v}_D \in \boldsymbol{H}^1$(resp. $\boldsymbol{v}_L \in \boldsymbol{H}^1$) such that $\boldsymbol{v}_D = (1, 0)^T$(resp. $\boldsymbol{v}_L = (0, 1)^T$) on the boundary of the cylinder and vanishes on the other boundaries. Another relevant quantity of interest is the kinetic energy of the flow, given by
	$$
	E_{K i n}=\frac{1}{2}\|\boldsymbol{u}\|_{\mathbf{L}^2}^2.
	$$
	\begin{figure}
		\centering
		\includegraphics[width=16.0005cm,height=7.95cm]{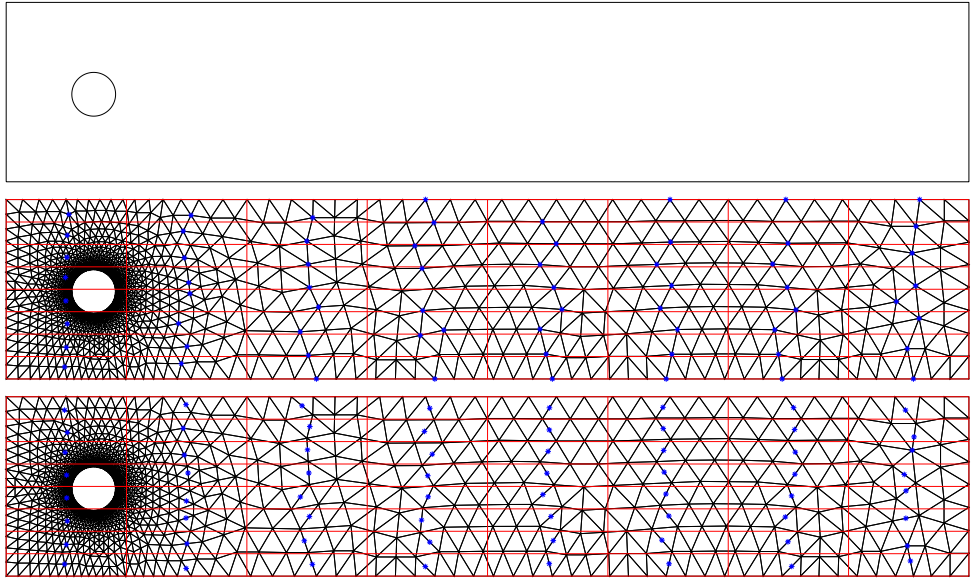}
		\caption{Computational domain(top) and two meshes(fine in black and coarse in red) with observation locations on $P^1$ FE nodes(middle) and $P^2$ FE nodes(bottom) both labeled in blue star.[Colors figure can be viewed at the website version]}\label{CompDomain_TwoMeshes_ObseLoca}
	\end{figure}
    \begin{figure}
    	\centering
    	\includegraphics[width=16.0005cm,height=7.95cm]{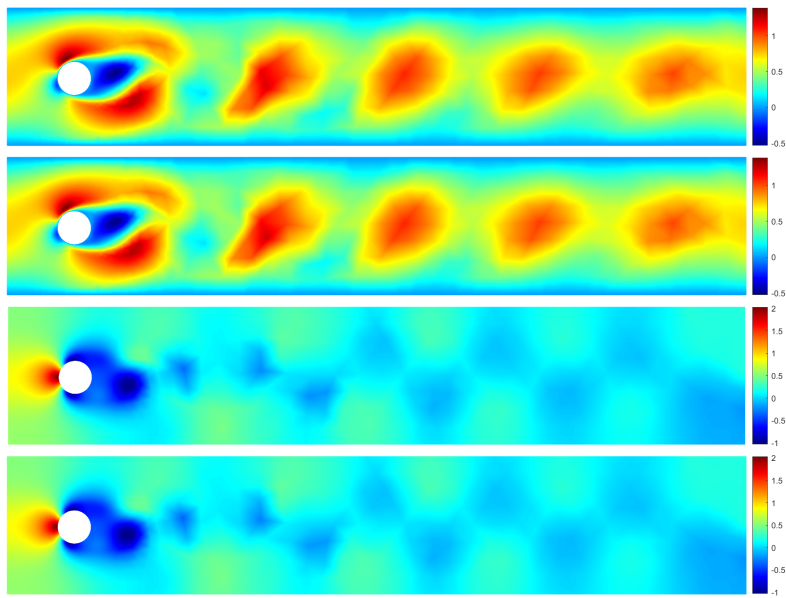}
    	\caption{Final temporal velocity(first two sub-figures) and pressure(last two sub-figures) magnitude; from top to bottom on two sub-figures are FE high-fidelity solutions(top) and reduced-order solutions(bottom).[Colors figure can be viewed at the website version]}\label{Magnitude_of_FE_and_POD_velocity_and_pressure}
    \end{figure}
    \begin{table}[htb!]
    	\centering
    	\caption{Quantitative comparisons of maximal drag coefficient $c_{\text{d},\max}$, maximal lift coefficient $c_{\text{l},\max}$ and pressure difference at final time $T=8$ with DNS, G-ROM(i.e., $\gamma_u = \gamma_p = 0$) and CDA-G-ROM(i.e., $\gamma_u=\gamma_p=100$).}\label{Comp-Table}
    	\begin{tabular}{ccccc}
    		\toprule[1pt]
    		& \quad DNS \quad & \quad G-ROM \quad & \quad CDA-G-ROM \quad & \quad Ref.\cite{Benchmark-1996}  \\
    		\midrule[0.5pt]
    		\quad  $c_{\text{d},\max}$ \quad & \quad 3.2247 \quad & \quad 3.2380 \quad & \quad 3.2226 \quad & \quad [3.22,$\;$3.24]  \\  
    		\quad  $c_{\text{l},\max}$ \quad & \quad 1.0007 \quad & \quad 0.9902 \quad & \quad 1.0032 \quad & \quad [0.99,$\;$1.01]  \\  
    		\quad  $\Delta_{p}(8\;\text{s})$ \quad & \quad 2.4357 \quad & \quad 2.3409 \quad & \quad 2.4359 \quad & \quad [2.46,$\;$2.50]  \\  
    		\bottomrule[1pt]
    	\end{tabular}
    \end{table}
	\subsection{DNS, CDA and POD modes}
	\indent The DNS to compute the snapshots is FEM-FOM (\ref{FEM-FOM}), with a spatial discretization using the mixed inf-sup stable $\boldsymbol{P}^2-P^1$ Taylor-Hood elements for velocity-pressure on a uniform refined mesh that provides 13496 velocity d.o.fs. and 1754 pressure d.o.fs., $\Delta t=1e-4$ and the whole time integration is performed on $[0,8]$. The DNS simulation is started from rest for both velocity and pressure, i.e., the initial conditions are zeros velocity field and pressure field. As stated in \cite{RefValue-John-2004-IJNMF} that the first-order discretization yields much more inaccurate results; moreover, fully resolved computation of the NSEs in this benchmark problem needs more than  100K d.o.fs., which mean the simulations here are all under-resolved, so we have to note that we could not expect exact agreement with solutions of our scheme with the reference ones; however, we still do expect the answers to be close.  \\
	\indent For the DA computation, we choose $I_H$ to be an approximation of the $L^2$ projection operator onto piece-wise constant on a coarse mesh, which was proposed and analyzed about its stability and convergence in \cite{CDA-2021-NMPDE}. The coarse mesh for DA is constructed using the intersection of uniform rectangular meshes with diameter $H$ denoting the width of each rectangular. For the observation locations' selection on each coarse element, we choose the node that is lying on the fine FE nodes within that coarse element, and meanwhile is closest to the center of this coarse element, to be the observation location on the coarse mesh to obtain the observation data and thus implementing our CDA-Proj-POD-ROM(\ref{CDA-Proj-POD-ROM}).	Fig.\ref{CompDomain_TwoMeshes_ObseLoca}(middle) shows in black the fine mesh for pressure and in red the coarse mesh with $H=2.2/8$(resulting in 64 DA points), observation points are labeled in the blue star on $P^1$ FE nodes; The sub-figure on Fig. \ref{CompDomain_TwoMeshes_ObseLoca}(bottom) corresponds the velocity, with the same number of observations points but on $P^2$ FE mesh. Fig. \ref{Magnitude_of_FE_and_POD_velocity_and_pressure} illustrates velocity and pressure magnitude; specifically, from top to bottom, the four sub-figures correspond in turn to FE high-fidelity velocity(first), POD reduced-order velocity(second), FE high-fidelity pressure(third) and finally POD reduced-order pressure(last). Tab. \ref{Comp-Table} also shows a quantitative comparison of quantities of interest about DNS, G-ROM(i.e., $\gamma_u = \gamma_p = 0$), and CDA-G-ROM(i.e., $\gamma_u=\gamma_p=100$) both with $r=19$, nudging velocity and pressure enhances the accuracy. \\
	\indent The POD velocity and pressure modes are generated simultaneously in $L^2$ inner product by the method of snapshots by storing continuously every DNS velocity-pressure solution from $t=5$ to $t=5.4$, during which the flow has reached a periodic-in-time state and the length of the time interval is greater than the length of one flow period. In other words, we run FOM on $[5,5.4]$ and then consider ROM to predict/extrapolation in time on $[5.4,8]$ to test the stability and accuracy of the reduced-order velocity and pressure, the latter time interval is six times larger than the one used to compute the snapshots and generate POD modes. Fig. \ref{EigeValuDesc-VeloPres} shows the decay of POD velocity ($\lambda_{i}$, $i=1,\cdots,71=d_u$) and pressure ($\theta_i$, $i=1,\cdots,75=d_p$) eigenvalues. We denote the corresponding captured system's energy as $100\sum_{i=1}^{r_u}\lambda_{i}/\sum_{i=1}^{d_u}\lambda_{i}$ for velocity, $100\sum_{i=1}^{r_p}\theta_{i}/\sum_{i=1}^{d_p}\theta_{i}$ for pressure, and we adopt $r_u=r_p=8,19$ within the forthcoming ROM computation, and those POD modes capture more than $99.95\%,99.9999\%$ of the system's velocity-pressure energy. We also remark that shorter time interval for taking snapshots is allowed, especially with the help of DA; that is, inaccurate snapshots(i.e., less than a flow period) can also be used to obtain relatively accurate and stable reduced-order solutions through DA technique. For more details, we refer to \cite{CDA-2019-CMAME,CDA-POD-2022-JCAM}.
	\begin{figure}
		\centering
		\includegraphics[width=14.9338cm,height=7.4200cm]{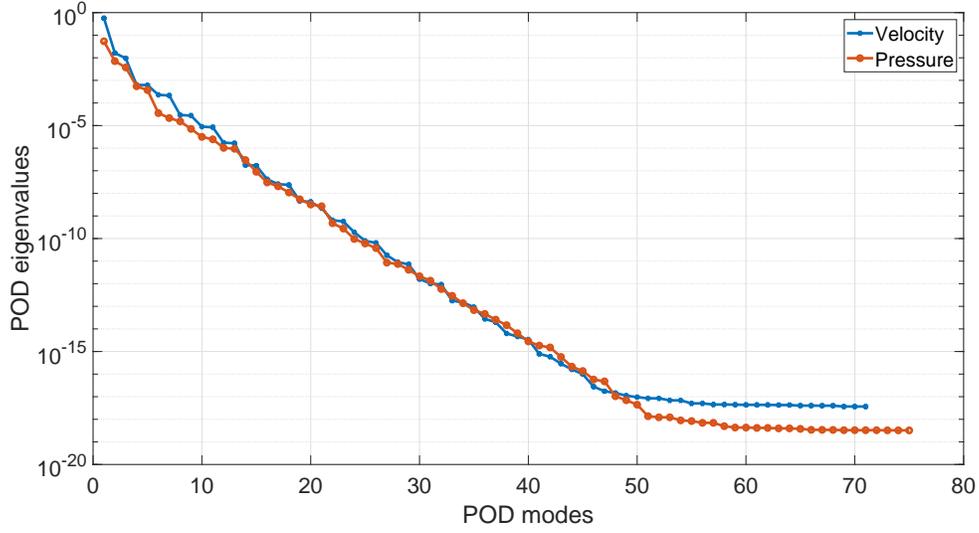}
		\caption{POD velocity and pressure eigenvalues}\label{EigeValuDesc-VeloPres}
	\end{figure}
    \begin{figure}
    	\centering
    	\includegraphics[width=14.9338cm,height=7.4200cm]{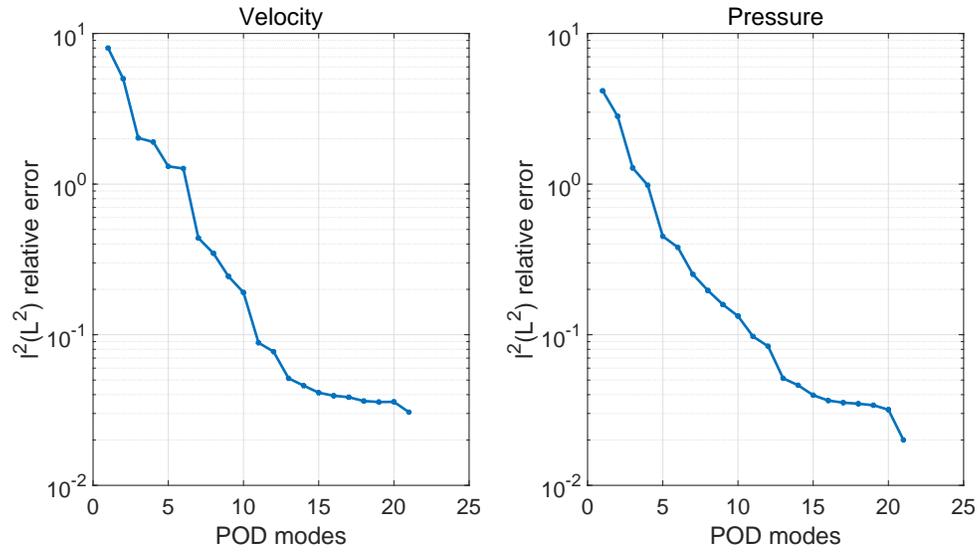}
    	\caption{Convergence of error for velocity(left) and pressure(right) concerning POD modes with $\gamma_u=100$ and $\gamma_p=100$.}\label{Conv-DiscTemp}
    \end{figure}
	\subsection{Stability, Convergence and High-Accuracy}
	\indent We first in this subsection verify numerically the theoretical results obtained from the previous sections, i.e. stability and convergence, especially the {\itshape a prior} pressure stability. We firstly verify the result of Theorem \ref{Conv-Exact-POD}, which states that the numerical error of reduced-order solutions will diminish with the increase of POD modes, up to the temporal and spatial dicretization error. Fig.\ref{Conv-DiscTemp} shows that the relative $L^2$ error for both velocity and pressure in discrete time integral decreases as increasing number of POD modes. Fig.\ref{RelaErr-nDA8-GammaP_0_100-GammaU_0_100-r-19} and Fig.\ref{3ParasEnergy-nDA8-gammaU-0-100-gammaP-0-100-r19} illustrate the necessity of CDA terms, in terms of relative error(Fig.\ref{RelaErr-nDA8-GammaP_0_100-GammaU_0_100-r-19}) and pressure difference at final time, lift and drag coefficients and the kinetic energy(Fig.\ref{3ParasEnergy-nDA8-gammaU-0-100-gammaP-0-100-r19}) using $r=19$ separately. In particular, we can see from Fig.\ref{RelaErr-nDA8-GammaP_0_100-GammaU_0_100-r-19} that without CDA(i.e., $\gamma_u=\gamma_p=0$), the relative error of velocity increases monotonically and continuously; at the same time, if only the velocity nudging term is added, the pressure's relative error fluctuates dramatically. Also we find from the temporal evolution concerning velocity that the performance of adding pressure nudging term solely(i.e., $\gamma_u=0,\;\gamma_p=100$) is not as good as the one of adding both(i.e., $\gamma_u=\gamma_p=100$). A similar phenomenon is observed in Fig.\ref{3ParasEnergy-nDA8-gammaU-0-100-gammaP-0-100-r19}, where the presence of velocity penalty term(i.e., $\gamma_u\neq 0$) makes the kinetic energy of reduced-order velocity more accurate.  \\
	\indent In order to further illustrate that CDA has the effects of stabilizing under-resolved reduced-order modeling, we need first to determine the optimal value of $r$, i.e., the smallest $r$ which can provide an optimal simulation for G-POD-ROM. The statement "under-resolved reduced-order modeling" corresponds to a numerical simulation where the lengthscale in ROM(e.g., the dimension $r$) is not big or small enough to capture essential information at almost all scales. We first show, in Fig. \ref{3ParasEnergy-nDA8-gammaP-0-gammaU-0-r-17-18-19-20-21}, the temporal evolution of quantities of interest for G-POD-ROM(i.e., $\gamma_u=\gamma_p=0$) and we can see from this figure that $r=19$ could be considered as the optimal value. We, therefore, show in Fig. \ref{3ParasEnergy-nDA8-gammaU-0-100-gammaP-0-100-r8} where $r=8$ that under-resolved reduced-order modeling occurs which manifests itself as the gradually deviating from DNS's in terms of quantities of interest. At that point, we find that the emergence of CDA pulls the temporal evolution of those aforementioned quantities back to the correct tracks, which means that CDA could play a role of stabilization to provide stable and high-accuracy solutions. This is analogous to solving the convection-dominated problem by FEM, where one way is to refine the lengthscale $h$ to serve as DNS, which inevitably leads to huge computational costs; while the other way which is significantly computationally cheap is to utilize the stabilization technique with not too small $h$.
    \begin{figure}
    	\centering
    	\includegraphics[width=1.1\linewidth]{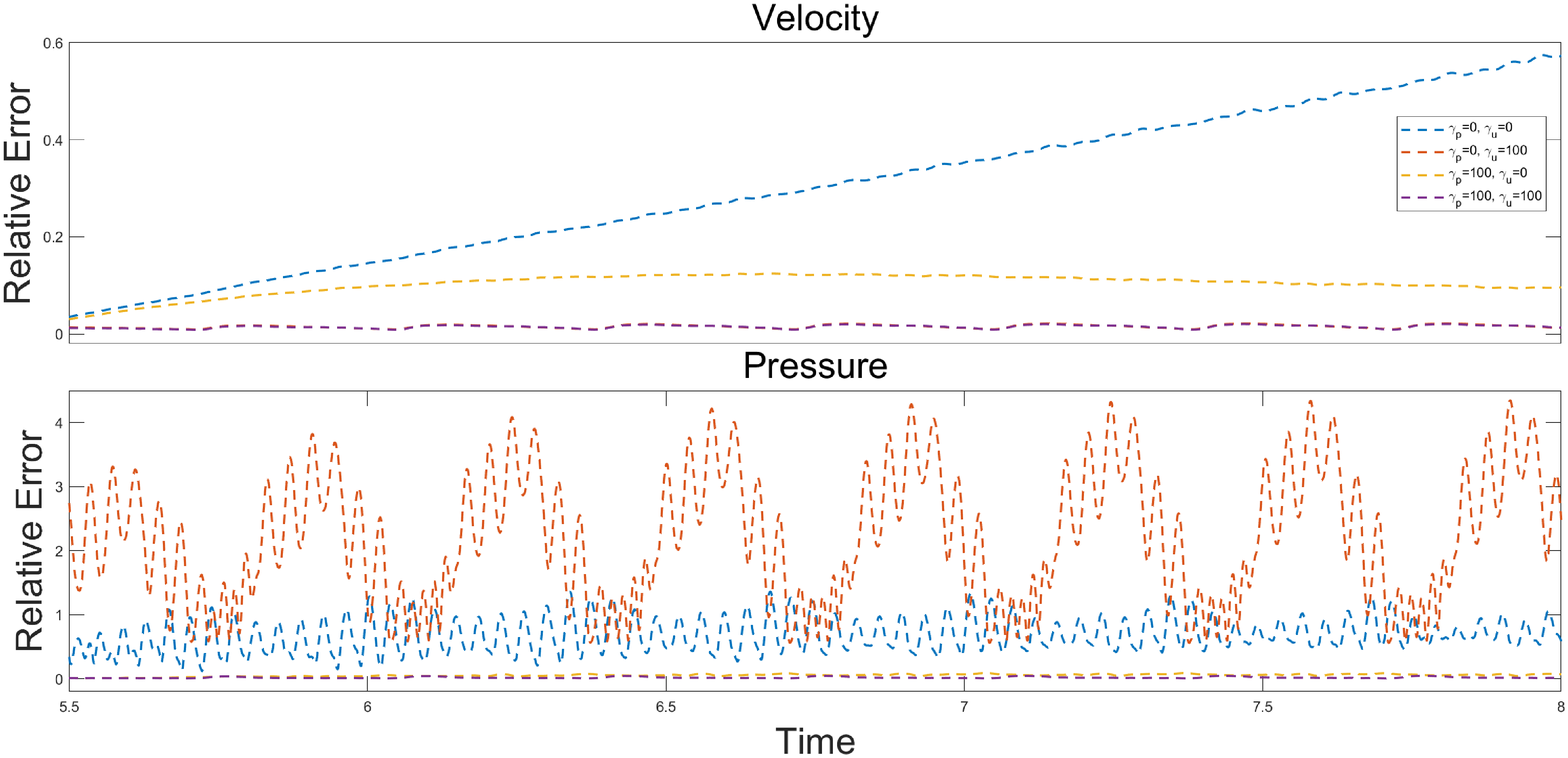}
    	\caption{Relative error's temporal evolution of velocity and pressure using $r=19$ modes with $\gamma_u=0,\,100$, $\gamma_p=0,\,100$.}\label{RelaErr-nDA8-GammaP_0_100-GammaU_0_100-r-19}
    \end{figure}
    \begin{figure}
    	\centering
    	\includegraphics[width=1.1\linewidth]{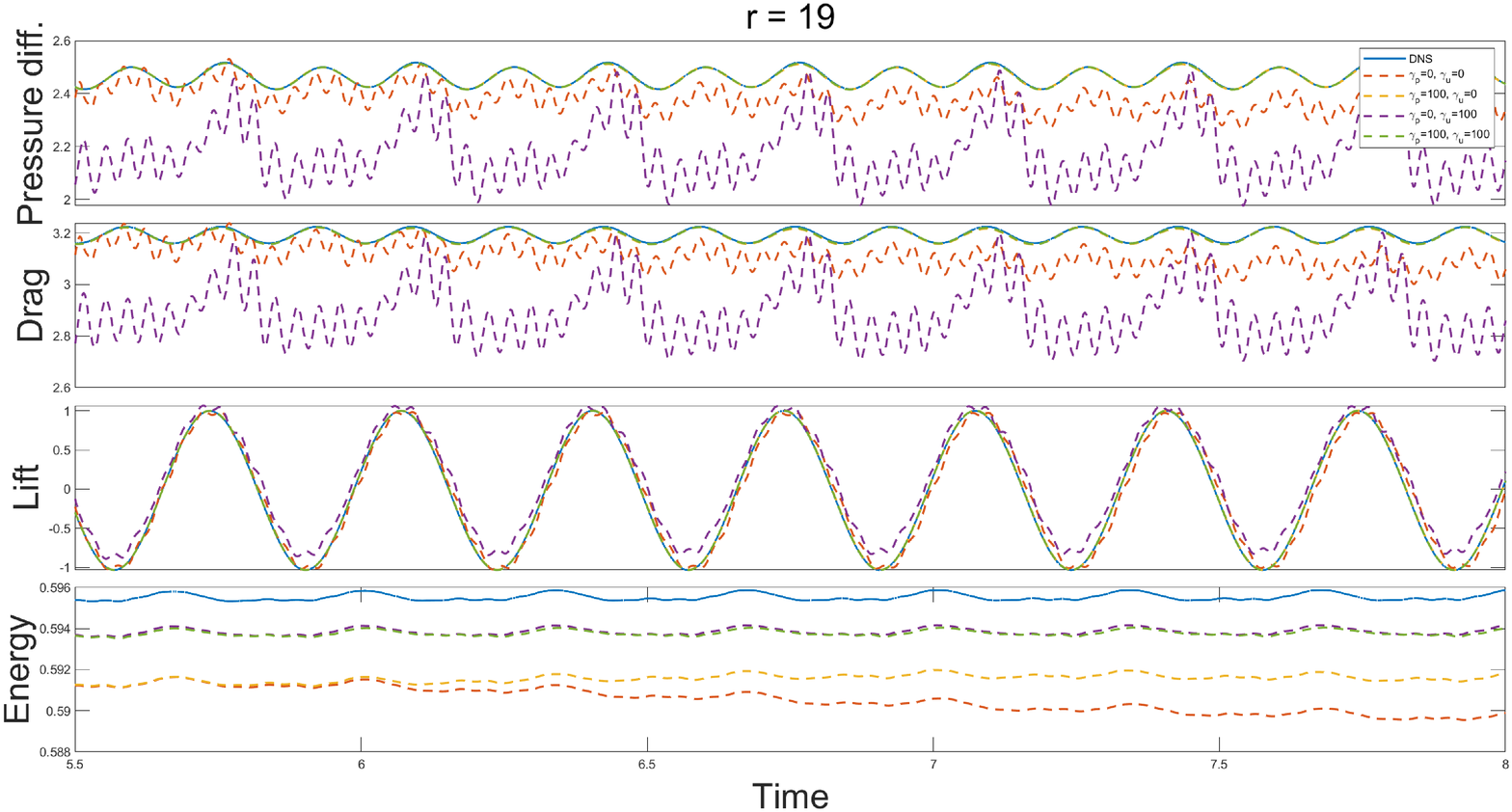}
    	\caption{Temporal evolution of pressure difference at final time, lift and drag coefficients, and kinetic energy using $r=19$ modes with $\gamma_u=0,\,100$, $\gamma_p=0,\,100$.}\label{3ParasEnergy-nDA8-gammaU-0-100-gammaP-0-100-r19}
    \end{figure}
    \begin{figure}
    	\centering
    	\includegraphics[width=1.1\linewidth]{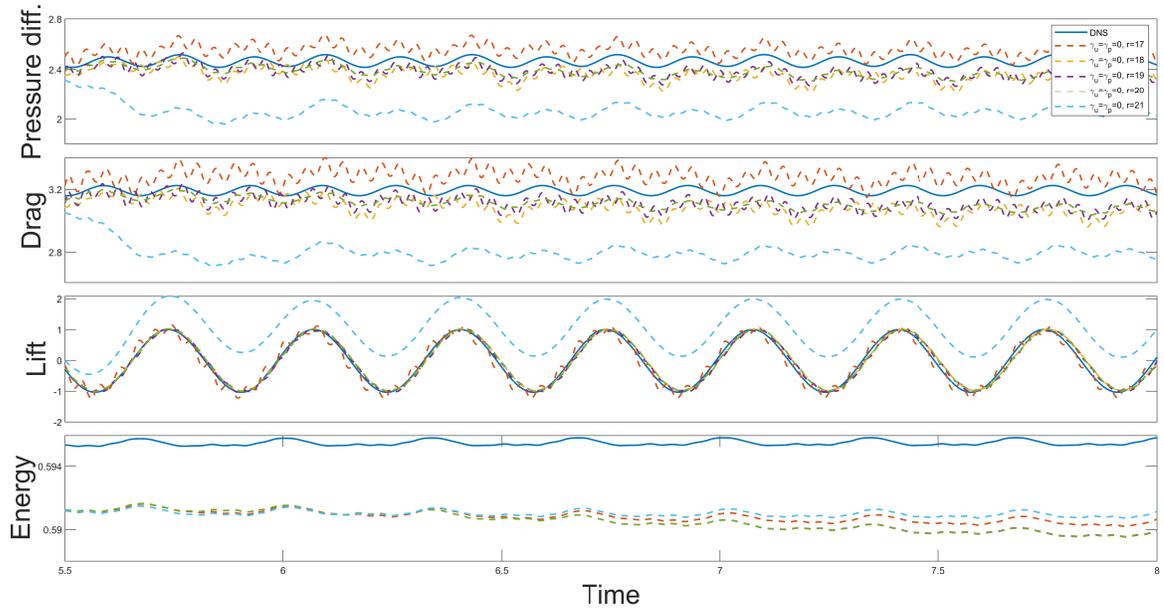}
    	\caption{Temporal evolution of pressure difference at final time, lift and drag coefficients, and kinetic energy with difference POD modes at $\gamma_u=\gamma_p=0$.}\label{3ParasEnergy-nDA8-gammaP-0-gammaU-0-r-17-18-19-20-21}
    \end{figure}
    \begin{figure}
    	\centering
    	\includegraphics[width=1.1\linewidth]{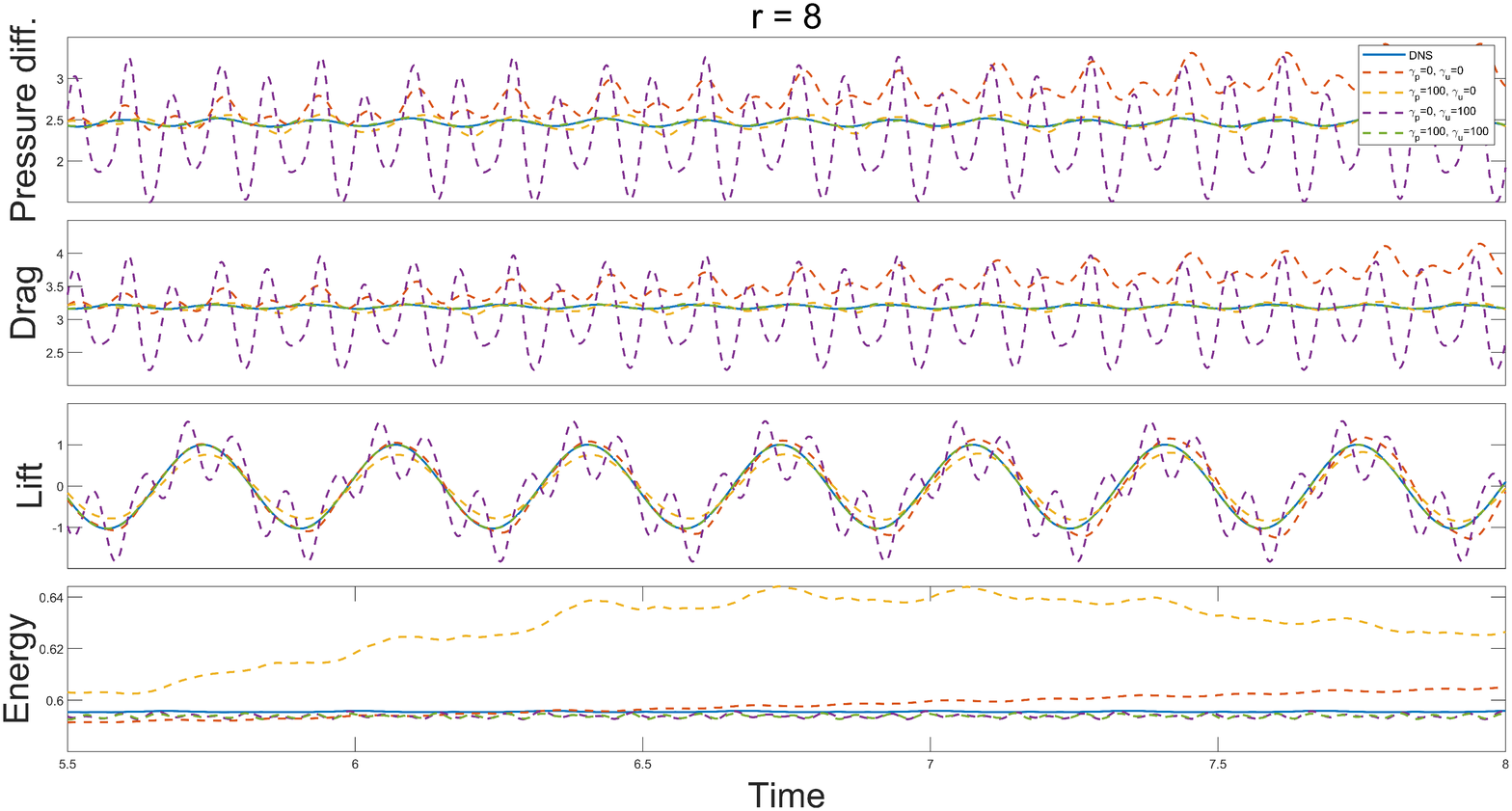}
    	\caption{Temporal evolution of pressure difference at final time, lift and drag coefficients, and kinetic energy using $r=8$ modes with $\gamma_u=0,\,100$, $\gamma_p=0,\,100$.}\label{3ParasEnergy-nDA8-gammaU-0-100-gammaP-0-100-r8}
    \end{figure}	
	\section{Conclusion}\label{section-5}
	In this manuscript, we have proposed a novel, alternative technique to stabilize the reduced-order pressure, and this technique permits ROM to compute the reduced-order pressure \textit{a prior} that is needed in the computation of some relevant quantities of interest, such as drag and lift coefficients and pressure difference on bodies in flow past a cylinder scenarios. With respect to other existing approaches that also can obtain stable and high accurate reduced-order pressure, this ROM is mainly superior on two aspects: \\  
	\indent On the one hand, it permits to obtain stable reduced-order pressure \textit{a prior}, thus avoiding the verification of discrete inf-sup condition over mixed POD velocity-pressure spaces and this compatibility’s validation still keeps an open questions, rather than \textit{a posterior} which would introduce artificial boundary condition if choosing pressure Poisson equation technique or would face the choice of stabilized parameters if choosing pressure-stabilized techniques, like SUPG or PSPG. \\  
	\indent On the other hand, the simple and modular implementation of the classical projection method, followed by the single-mesh’s operation after adopting the algebraic nudging in \cite{CDA-2021-NMPDE}, makes the new ROM more efficient compared to other coupled or residual-based stabilization ROM, which further manifests the strength of ROM over FOM. Numerical analysis gives theoretical guarantees and numerical experiments shows the advantages of our ROM in terms of pressure stability and high accuracy.  \\
	\indent We plan to extend the current work from two aspects: one is to extend NSE to more practical models/equations, like Rayleigh–Bénard convection, such as \cite{RBC-POD-ROM-2019-AMM,RBC-POD-ROM-2020-ICHMT}, thereby broadening application scenes about this CDA-based ROM; another idea deserving carefully consideration is sensitivity analysis about parameters(i.e. viscosity parameter in NSE) between the FOM and ROM, similar works include sensitivity analysis applying to CDA in \cite{CDA-SensAnal-2021-JNS}, parameter varying between FOM and ROM in \cite{ParaVary-2022-CMAME} and parameter recovery in \cite{ParaRecov-2020-SISC}.
	
	\appendix		
	\section{Proof of Lemma \ref{Lemma-Stab}} \label{Proof-Stab}	
	\begin{proof}
		The techniques used here are inspired by \cite{POD-LPS-SINUM-2021}. We firstly assume
		\begin{equation}\label{dt-h2}
			\Delta t \leq Ch^2,
		\end{equation}
		then, 
		\begin{equation}\label{Cons-unh-Linfty}
		\begin{aligned}		
			\|\boldsymbol{u}^n_h\|_{\infty} &\leq \|\boldsymbol{u}^n_h - \boldsymbol{s}^n_h\|_{\infty} + \|\boldsymbol{s}^n_h\|_{\infty} \\
			&\leq Ch^{-1}\|\boldsymbol{u}^n_h - \boldsymbol{s}^n_h\| + C_{1,S}\left(\|\boldsymbol{u}^n\|\|\boldsymbol{u}^n\|_{2}\right)^{1/2} \\
			&\leq Ch^{-1}\left(\|\boldsymbol{u}^n_h - \boldsymbol{u}^n\| + \|\boldsymbol{u}^n - \boldsymbol{s}^n_h\|\right) + C_{1,S}\left(\|\boldsymbol{u}^n\|\|\boldsymbol{u}^n\|_{2}\right)^{1/2}	\\
			&\leq Ch^{-1}\left(h^2 + \Delta t + h^2\right) + C_{1,S}\left(\|\boldsymbol{u}^n\|\|\boldsymbol{u}^n\|_{2}\right)^{1/2} \\
			&\leq Ch + C_{1,S}\left(\|\boldsymbol{u}^n\|\|\boldsymbol{u}^n\|_{2}\right)^{1/2} \\
			&:= C_{L^{\infty}}.
		\end{aligned}
	    \end{equation}		
		Similarly, 
		\begin{equation}\label{Cons-unh-W1infty}
		\begin{aligned}		
			\|\nabla \boldsymbol{u}^n_h\|_{\infty} &\leq \|\nabla (\boldsymbol{u}^n_h - \boldsymbol{s}^n_h)\|_{\infty} + \|\nabla \boldsymbol{s}^n_h\|_{\infty} \\
			&\leq Ch^{-1}\|\nabla (\boldsymbol{u}^n_h - \boldsymbol{s}^n_h)\| + C\|\nabla \boldsymbol{u}^n\|_{\infty} \\
			&\leq Ch^{-1}\left(\|\nabla (\boldsymbol{u}^n_h - \boldsymbol{u}^n)\| + \|\nabla (\boldsymbol{u}^n - \boldsymbol{s}^n_h)\|\right) + C\|\nabla \boldsymbol{u}^n\|_{\infty}	\\
			&\leq Ch^{-1}h^{-1}\left(h^2 + \Delta t\right) + C\|\nabla \boldsymbol{u}^n\|_{\infty} \\
			&\leq C + C\|\nabla \boldsymbol{u}^n\|_{\infty} \\
			&:= C_{W^{1,\infty}}.
		\end{aligned}
		\end{equation}
		Moreover, using the definition of POD modes $\boldsymbol{\varphi}_k$, the fact that $\|\boldsymbol{\varphi}_k\|=1$, the descending property of eigenvalues $\lambda_i$, the orthogonality of eigenvector $x^n$ and the construction of POD basis is derived from the eigen-decomposition of correlation matrix, thus, 
		$$
		\begin{aligned}
			\left\| \boldsymbol{u}^n_h - \Pi^u_r\boldsymbol{u}^n_h \right\| &= \left\|\sum_{i=1}^{d_u}(\boldsymbol{u}^n_h, \varphi_i)\varphi_i - \sum_{i=1}^{r_u}(\boldsymbol{u}^n_h, \varphi_i)\varphi_i\right\| = \left\|\sum_{i=r_u+1}^{d_u}(\boldsymbol{u}^n_h, \varphi_i)\varphi_i \right\|  \\
			&= \left\|\sum_{i=r_u+1}^{d_u}\sum_{j=1}^{M}\frac{1}{\sqrt{M}}\frac{1}{\sqrt{\lambda_i}}x^j_i(\boldsymbol{u}^n_h, u^j_h)\varphi_i \right\|  \\
			&= \left\|\sum_{i=r_u+1}^{d_u}\frac{1}{\sqrt{M}}\frac{1}{\sqrt{\lambda_i}}\left(\sum_{j=1}^{M}(\boldsymbol{u}^n_h, \boldsymbol{u}^j_h)x^j_i\right)\varphi_i \right\|  = \left\|\sum_{i=r_u+1}^{d_u}\frac{1}{\sqrt{M}}\frac{1}{\sqrt{\lambda_i}}M\lambda_ix^n_i\varphi_i \right\|  \\
			&= \sqrt{M}\left( \sum_{i=r_u+1}^{d_u}\lambda_i|x^n_i|^2 \right)^{1/2} \leq \sqrt{M}\sqrt{\lambda_{r_u+1}}\left( \sum_{i=r_u+1}^{d_u}|x^n_i|^2 \right)^{1/2} \leq \sqrt{M}\sqrt{\lambda_{r_u+1}}.
		\end{aligned}
	    $$
		Therefore,	
		\begin{equation}\label{Cons-Stab-0}
		\begin{aligned}
			\|\Pi^u_r \boldsymbol{u}^n_h\|_{\infty} &\leq \|\boldsymbol{u}^n_h\|_{\infty} + \|\boldsymbol{u}^n_h - \Pi^u_r \boldsymbol{u}^n_h\|_{\infty} \\
			&\leq C_{L^{\infty}} + Ch^{-1}\|\boldsymbol{u}^n_h - \Pi^u_r \boldsymbol{u}^n_h\| \\
			&\leq C_{L^{\infty}} + Ch^{-1}\sqrt{M}\sqrt{\lambda_{r_u+1}} \\
			&:= C_{stab,0}.
	    \end{aligned}
        \end{equation}
		Similarly, using Assumption \ref{Assu} and inverse inequality (\ref{InveIneq}), we get
		\begin{equation}
		\begin{aligned}\label{Cons-Stab-1}
			\|\nabla\Pi^u_r \boldsymbol{u}^n_h\|_{\infty} &\leq \|\nabla \boldsymbol{u}^n_h\|_{\infty} + \|\nabla(\boldsymbol{u}^n_h - \Pi^u_r \boldsymbol{u}^n_h)\|_{\infty} \\
			&\leq C_{W^{0,\infty}} + Ch^{-1}\|\nabla(u^n_h - \Pi^u_r u^n_h)\| \\
			&\leq C_{W^{0,\infty}} + Ch^{-1}\sqrt{\sum^{d_u}_{i=r_u+1}\lambda_{i}\|\nabla\varphi_i\|^2} \\
			&:= C_{stab,1}.
	    \end{aligned}
       \end{equation}	
		\indent Finally, at the end of this proof, we remark that the factor $h^{-1}$ appearing in both upper-bound constant $C_{stab,0},C_{stab,1}$ seems to have an negative impact on the stability of these two constants; nevertheless, we notice that the other factor $\lambda_i$, $i\geq r_u+1$ can slow down the trend that $h^{-1}$ becomes too large. In computation, in order to guarantee a good balance between computational complexity and ROM accuracy, $h$ will not be too small and also $\lambda_{r_u+1}$ not too big, which keeps the magnitude $\mathcal{O}(h^{-1}\sqrt{\lambda_{r_u+1}})$ within a reasonable range. For example, in previous works, $h^{-1}\sqrt{\lambda_{r_u+1}} \approx \mathcal{O}(10^{-1})$ in \cite{POD-LPS-SINUM-2020,POD-LPS-SINUM-2021} for benchmark tests, and also $h^{-1}\sqrt{\lambda_{r_u+1}} \approx \mathcal{O}(10^{-1})$ in \cite{Luo-RO-Extra-2019-JMAA,MyFirstPaper} for known smooth solutions. 		
	\end{proof}	
			
	\bibliographystyle{plain}
	\bibliography{Reference}	
\end{document}